\documentclass{article}
\usepackage[a4paper,margin=0.8in,top=3cm,bottom=4cm,footskip=0.5cm]{geometry}
\usepackage{pgfplots}
\usepackage{subcaption}
\usepackage{float}
\usepackage{pgfplots}
\usepackage{tikz}
\usetikzlibrary{fillbetween}
\usepgfplotslibrary{fillbetween}
\pgfplotsset{compat=1.17}
\usepackage{caption}
\usepackage{comment}
\usepackage[a4paper]{geometry}
\usepackage[utf8]{inputenc}
\usepackage{amsmath,amsfonts,amssymb,amsthm,bbm,mathrsfs}
\usepackage[shortlabels]{enumitem}
\usepackage{xcolor}
\usepackage{graphicx}
\usepackage{tikz}
\usepackage{appendix}
\usepackage{setspace}
\usepackage{fancyhdr}
\usepackage{hyperref}
\hypersetup{
    colorlinks=true,
    linkcolor=blue!60!cyan,
    filecolor=magenta,      
    urlcolor=blue!60!cyan,
    citecolor=blue!60!cyan,
    pdfpagemode=FullScreen,
}

\newtheorem{thm}{Theorem}[section]
\newtheorem{df}{Definition}[section]

\newtheorem{rmk}{Remark}[section]

\newtheorem{prop}{Proposition}[section]
\newtheorem{lm}{Lemma}[section]

\numberwithin{equation}{section}

\fancypagestyle{firstpage}{
\fancyhf{} 

\fancyfoot[L]{%
\footnotesize
\rule{1.8cm}{0.8pt}\\
$^\dagger$ Dipartimento di Matematica, Università degli Studi di Salerno,  Via Giovanni Paolo II, 132, 84084 Fisciano (SA), Italy.  \\
$^\ddagger$ Department of Mathematics, MAMCS Group, FST Errachidia, Moulay Ismail University of Meknes, P.O. Box 509, Boutalamine 52000, Errachidia, Morocco. \\
E-mail addresses: {\color{blue!60!cyan}\texttt{aelgrou@unisa.it}}, {\color{blue!60!cyan}\texttt{arhandi@unisa.it}}, {\color{blue!60!cyan}\texttt{j.salhi@umi.ac.ma}}
}
}

\title{\textbf{Long-Time Stability Analysis for Stochastic Evolution Equations with Multiplicative Noise}}

\author{Abdellatif Elgrou$^\dagger$,\, Abdelaziz Rhandi$^\dagger$ \,and\,  Jawad Salhi$^{\ddagger}$}

\date{}

\begin{document}

\maketitle
\vspace{-3em}

\thispagestyle{firstpage}
\begin{center}
\end{center}

\begin{abstract}
In this paper, we study the long-time stability behavior of a class of linear stochastic evolution equations in a Hilbert space with multiplicative noise. Explicit sufficient conditions for $p$-th moment and almost sure exponential stability are established, highlighting the interplay between the principal  eigenvalue of the governing operator, the drift coefficient, and the noise intensity. The relationship between these two notions of stability is also clarified. Applications to several stochastic partial differential equations are presented. In addition, a fully discrete spectral Galerkin method together with the implicit Euler--Maruyama scheme is shown to preserve these stability properties at the discrete level. Finally, numerical simulations are provided to confirm the theoretical results.
\end{abstract}
\maketitle
\smallskip

\noindent\textbf{AMS Mathematics Subject Classification:} 37H30, 60H15, 35B35, 65N12\\
\textbf{Keywords:} Stochastic evolution equations, multiplicative noise, implicit schemes, $p$-th moment exponential stability, almost sure exponential stability, numerical stability.

\section{Introduction}
Stochastic partial differential equations (SPDEs), and in particular stochastic evolution equations in infinite-dimensional Hilbert spaces, have become a central topic in modern probability theory and applied mathematics due to their ability to model systems subject to both spatial dynamics and random perturbations. Such models arise naturally in physics, biology, engineering, and finance, where uncertainty and noise play a fundamental role in the evolution of the system; see, for instance, \cite{Lue2021a,da2014stochastic,walsh1986,hu2019some,hu2015stochastic, hu2019some, mueller1991support} for classical foundations and recent developments.

In systems theory, the stability of stochastic evolution equations remains a prominent subject, having been greatly developed over the past years. Significant foundational developments were established by several authors (see, e.g., \cite{Haussmann1978,Caraballo1990,Khasminskii1994,ElBorai2003}), particularly regarding the asymptotic stability of the linear Ito equation in infinite dimension. Throughout this paper, we consider a class of linear stochastic evolution equations, although nonlinear cases have also been investigated; see, for instance, \cite{Ichikawa1982,Chow1982,Caraballo1994,Taniguchi1995,Caraballo1999,Ding2024}. In these works the stability of solutions has been studied with respect to sample paths or moments by using, for instance, coercivity conditions, Lyapunov functionals, and energy estimates. A common strategy in most of these papers consists of establishing almost sure exponential stability as a by-product of moment stability, which often necessitates overly restrictive assumptions to ensure the decay of expectations. We also refer to \cite{Liu2018}, where pathwise exponential stability is tackled by a direct approach, proving that a class of stochastic functional differential equations with delays driven by multiplicative noise, which are not exponentially stable in the moment sense, have the almost sure pathwise exponential stability property.

Building upon these classical results, the field has seen numerous advancements in recent years (see, e.g., \cite{Chow2011,Lv2022,Lv2023}), specifically addressing the impact of additive versus multiplicative noise on the stability of solutions.

In the literature, it is shown that multiplicative noise can act as a stabilizing mechanism for unstable deterministic systems, highlighting the intricate interplay between noise intensity and system dynamics (see, e.g., \cite{Caraballo2001, Mao2008}). This phenomenon, known as \emph{noise-induced stabilization}, has since been widely studied in both finite- and infinite-dimensional settings. In particular, the $p$-th moment and almost sure exponential stability results for stochastic heat equations driven by Brownian motion were established in \cite{xie2008moment}.

In parallel, the numerical analysis of evolutionary SPDEs has emerged as a highly active research area. This field has experienced rapid growth over recent decades, as reflected in an extensive body of literature (see, e.g., \cite{Anton2020, Jentzen2009, Jentzen2011, Kruse2014, Kruse2014Galerkin, Lord2014}).

From this perspective, there has been increasing interest in the development of numerical methods that preserve qualitative properties such as stability, decay rates, and asymptotic behavior. Classical results on mean-square, asymptotic, and moment exponential stability of numerical schemes for stochastic differential equations were established in \cite{Higham2000, Higham2007, Mao2015}, and later extended to infinite-dimensional systems driven by space-time noise in \cite{yang2025long,Lang2017}. For other related results on the numerical positivity and almost surely exponential stability of the stochastic heat equation, see for example \cite{yang2022stochastic}.

The present work provides a refined stability analysis for linear stochastic evolution equations in a Hilbert space with multiplicative noise, covering a broad class of stochastic partial differential equations. We also study a fully discrete approximation of our abstract continuous problem using a spectral Galerkin method combined with the implicit Euler--Maruyama scheme and show that it preserves the same stability properties at the discrete level, with numerical experiments supporting the theoretical results. Explicit sufficient conditions are derived for both $p$-th moment and almost sure exponential stability, showing that pathwise convergence may still hold even when moment stability fails. Within this abstract framework, the results recover classical cases such as those in \cite{xie2008moment} for the stochastic heat equation and extend to many previously unaddressed models, including stochastic biharmonic, fractional, and degenerate parabolic equations.

\subsection{Statement of the Problem}

We first introduce some notation and standing assumptions used throughout this paper. Let 
$(\Omega, \mathcal{F}, \mathbf{F}, \mathbb{P})$ be a complete filtered probability space, where 
$\mathbf{F} = \{\mathcal{F}_t\}_{t \ge 0}$ is the natural filtration generated by a one-dimensional standard Brownian motion 
$\{W(t)\}_{t \ge 0}$, augmented by all the $\mathbb{P}$-null sets in $\mathcal{F}$. We denote by $\mathbb{F}$ the progressive 
$\sigma$-algebra associated with the filtration $\mathbf{F}$, and by $\mathbb{E}(\cdot)$ the expectation operator with respect 
to the probability measure $\mathbb{P}$. Moreover, let $\mathbb{N} = \{0,1,2,\ldots\}$ denote the set of natural numbers including zero. 

Let $H$ be a Hilbert space endowed with inner product $\langle \cdot, \cdot \rangle_H$ and associated norm 
$\|\cdot\|_H$. We now introduce the following function spaces (see, e.g., \cite{Lue2021a,da2014stochastic} for further details).  Let $p \ge 1$.

\begin{itemize}

\item $L^p_{\mathcal{F}_t}(\Omega; H)$ denotes the space of all $\mathcal{F}_t$-measurable $H$-valued random variables $\xi$ such that $\mathbb{E}\big(\|\xi\|_H^p\big) < \infty$. It is a Banach space equipped with the norm
\[
\|\xi\|_{L^p_{\mathcal{F}_t}(\Omega; H)} = \big[\mathbb{E}(\|\xi\|_H^p)\big]^{1/p}.
\]
In the case $p=2$, it is a Hilbert space with the inner product
\[
\langle \xi_1,\xi_2\rangle_{L^2_{\mathcal{F}_t}(\Omega; H)}
= \mathbb{E}\big(\langle \xi_1,\xi_2\rangle_H\big).
\]

\item $L^p_{\mathbb{F}}(0,\infty; H)$ denotes the space of all $H$-valued $\mathbf{F}$-adapted processes $X(\cdot)$ such that $\mathbb{E}\left(\int_0^\infty \|X(t)\|_H^p\,dt\right) < \infty$. It is a Banach space endowed with the norm
\[
\|X\|_{L^p_{\mathbb{F}}(0,\infty; H)}
= \left(\mathbb{E}\int_0^\infty \|X(t)\|_H^p\,dt\right)^{1/p}.
\]
For $p=2$, it becomes a Hilbert space with the inner product
\[
\langle X_1,X_2\rangle_{L^2_{\mathbb{F}}(0,\infty; H)}
= \mathbb{E}\left(\int_0^\infty \langle X_1(t),X_2(t)\rangle_H\,dt\right).
\]

\item $L^p_{\mathbb{F}}(\Omega; C([0,\infty); H))$ denotes the space of all $H$-valued, $\mathbf{F}$-adapted continuous processes $X(\cdot)$ such that $\mathbb{E}\left(\sup_{t\ge 0} \|X(t)\|_H^p\right) < \infty$. It is a Banach space equipped with the norm
\[
\|X\|_{L^p_{\mathbb{F}}(\Omega; C([0,\infty); H))}
= \left[\mathbb{E}\left(\sup_{t\ge 0} \|X(t)\|_H^p\right)\right]^{1/p}.
\]

\end{itemize}

Let $A:\mathcal{D}(A)\subset H \to H$ be a linear self-adjoint, positive-definite operator with compact resolvent. Then, there exist sequences of eigenvalues and eigenfunctions $\{(\lambda_k,\phi_k)\}_{k=1}^{\infty}$ of $A$ such that $$ 0<\lambda_1 \leq \lambda_2 \leq \cdots,\quad \lambda_k \to +\infty \quad \text{as }\; k \to \infty, $$ 
and $\{\phi_k\}_{k=1}^{\infty}\subset\mathcal{D}(A)$ forms an orthonormal basis of $H$, with 
\begin{align}\label{spectdecA}
A\phi_k = \lambda_k \phi_k, \quad k \geq 1. 
\end{align}
Moreover, $-A$ generates an analytic $C_0$-semigroup $(S(t))_{t\ge0}$ on $H$ (see, e.g., \cite{engel2000one,lorenzi2021semigroups}), and $A$ satisfies the following spectral Poincaré-type inequality:
\begin{equation}\label{spectralpoincareinequality}
\langle A\varphi,\varphi\rangle_H \ge \lambda_1 \|\varphi\|_H^2, \quad \forall\, \varphi \in \mathcal{D}(A),
\end{equation}
where $\lambda_1 > 0$ denotes the principal eigenvalue of $A$, which can be given by
\begin{equation*}
\lambda_1 = \inf_{\varphi \in \mathcal{D}(A)\setminus\{0\}} 
\frac{\langle A\varphi,\varphi\rangle_H}{\|\varphi\|_H^2},
\end{equation*}
corresponding to the classical Rayleigh quotient associated with the operator $A$.

Consider the deterministic linear evolution equation
\begin{equation}\label{det_eq}
\begin{cases}
\frac{dy(t)}{dt}+ Ay(t) = \beta_0 y(t), \quad t>0,\\
y(0)=y_0,
\end{cases}
\end{equation}
where $\beta_0 \in \mathbb{R}$ and $y_0\in H$ is the initial condition. This system admits a unique weak solution $y\in C([0,\infty);H)$. Moreover, by differentiating $t\mapsto \|y(t)\|_H^2$, using \eqref{spectralpoincareinequality}, and applying Grönwall’s inequality, it follows that the solution of \eqref{det_eq} satisfies
\[
\|y(t)\|_H \le e^{-(\lambda_1-\beta_0)t}\|y_0\|_H,\quad \text{for any } t\ge0.
\]
This shows that the deterministic system \eqref{det_eq} is exponentially stable under the condition $\beta_0<\lambda_1$. For further details on exponential decay estimates for evolution equations, see, e.g., \cite{pazy2012semigroups}. However, in many applications the parameter $\beta_0$ is subject to uncertainties or environmental fluctuations, which may lead to loss of stability. On the other hand, random perturbations may also have a stabilizing effect on unstable systems. To this end, we assume that $\beta_0$ is affected by random fluctuations of the form $\beta_0 + \beta_1 \xi(t)$, where $\xi(t)$ is a rapidly varying random process representing external perturbations. A natural way to incorporate this is to model $\xi(t)$ by Gaussian white noise, formally written as $\dot{W}(t)=\frac{dW(t)}{dt}$, the generalized time derivative of the Brownian motion $W(t)$, and to replace $\xi(t)\,dt$ by the It\^o differential $dW(t)$. The system \eqref{det_eq} then becomes the following linear stochastic evolution equation with multiplicative noise
\begin{equation}\label{uncontrolled_for_eq}
\begin{cases}
dy(t) + Ay(t) \,dt = \beta_0 y(t)\,dt + \beta_1 y(t) \,dW(t), \quad t>0,\\
y(0)=y_0,
\end{cases}
\end{equation}
which will be the main object of study in this paper. Here $\beta_0,\beta_1 \in \mathbb{R}$ are given constants, and $y_0 \in L^p_{\mathcal{F}_0}(\Omega;H)$ for some $p \ge 1$ is a random initial condition.

We first recall the notions of mild and weak solutions of \eqref{uncontrolled_for_eq}; see, e.g., \cite{da2014stochastic,Lue2021a}.

\begin{df}
\begin{enumerate}
\item  An $H$-valued, $\mathbf{F}$-adapted, continuous stochastic process $y$ is called a \emph{mild solution} to \eqref{uncontrolled_for_eq} if, for every $t\geq0$, it holds that \[ y(t) = S(t)y_0 + \int_0^t S(t-s)\,\beta_0 y(s)\, ds + \int_0^t S(t-s)\,\beta_1 y(s)\, dW(s),\quad\textnormal{a.s.}\] 
\item An $H$-valued, $\mathbf{F}$-adapted, continuous stochastic process $y$ is called a \emph{weak solution} to \eqref{uncontrolled_for_eq}  if, for every $t\geq0$ and  $\phi \in \mathcal{D}(A)$, we have that
\[ \begin{aligned} \langle y(t), \phi \rangle_H &= \langle y_0, \phi \rangle_H - \int_0^t \langle y(s), A\phi \rangle_H \, ds \\ &\quad + \int_0^t \langle \beta_0 y(s), \phi \rangle_H \, ds + \int_0^t \langle \beta_1 y(s), \phi \rangle_H \, dW(s),\quad\textnormal{a.s.} \end{aligned} \] 
\end{enumerate}
\end{df} 
From \cite[Theorem 3.10]{Lue2021a}, the notions of weak and mild solutions to \eqref{uncontrolled_for_eq}  are equivalent. Hence, we refer to either of them simply as a solution to \eqref{uncontrolled_for_eq} . Moreover, since the operator $-A$ is self-adjoint and negative, it follows from \cite[Theorem 3.24]{Lue2021a} that the following well-posedness result holds.
\begin{prop}
    For any $y_0 \in L^p_{\mathcal{F}_0}(\Omega; H)$, where $p \geq 1$, the equation \eqref{uncontrolled_for_eq} admits a unique solution
    \[
    y \in L^p_{\mathbb{F}}(\Omega; C([0,\infty);H)) \cap L^p_{\mathbb{F}}(\Omega; L^2(0,\infty; \mathcal{D}(A^{1/2}))).
    \]
    Moreover, there exists a constant $C > 0$ such that
    \[
    \|y\|_{L^p_{\mathbb{F}}(\Omega; C([0,\infty);H))} + \|y\|_{L^p_{\mathbb{F}}(\Omega; L^2(0,\infty; \mathcal{D}(A^{1/2})))} \leq C \|y_0\|_{L^p_{\mathcal{F}_0}(\Omega; H)}.
    \]
\end{prop}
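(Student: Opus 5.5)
The plan follows the route the paper indicates: obtain existence, uniqueness and the a priori estimate from \cite[Theorem 3.24]{Lue2021a}, and cross-check everything with an explicit solution formula coming from the spectral decomposition \eqref{spectdecA}. That formula isolates exactly where a time-uniform constant on $[0,\infty)$ has to come from. I should say at the outset that my own analysis below suggests the uniform bound cannot hold for all $\beta_0,\beta_1,p$, so the plan ends with a diagnosis rather than a complete proof.

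\emph{Step 1: existence and uniqueness.} Since $-A$ is self-adjoint and negative and generates the analytic semigroup $S(t)$, and the noise coefficient $y\mapsto \beta_1 y$ is bounded linear on $H$, the hypotheses of \cite[Theorem 3.24]{Lue2021a} hold. Hence for every $y_0\in L^p_{\mathcal F_0}(\Omega;H)$ there is a unique mild (equivalently weak, by \cite[Theorem 3.10]{Lue2021a}) solution. To make it explicit, project onto $\phi_k$ to get the scalar equations $dy_k=(\beta_0-\lambda_k)y_k\,dt+\beta_1y_k\,dW$. Their solutions are
\[
y_k(t)=e^{-(\lambda_k-\beta_0+\beta_1^2/2)t+\beta_1W(t)}\langle y_0,\phi_k\rangle_H .
\]
Summing gives
\[
y(t)=M(t)\,S(t)y_0,\qquad M(t):=e^{(\beta_0-\beta_1^2/2)t+\beta_1W(t)},
\]
which one checks by It\^o's formula and uniqueness. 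In particular $\|y(t)\|_H\le e^{-(\lambda_1-\beta_0+\beta_1^2/2)t+\beta_1W(t)}\|y_0\|_H$.

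\emph{Step 2: the energy estimate.} Apply It\^o's formula to $\|y(t)\|_H^p$ and use \eqref{spectralpoincareinequality}. This produces the dissipation term $p\int\|y\|_H^{p-2}\|A^{1/2}y\|_H^2\,ds$ plus a martingale term. The martingale is handled by the Burkholder--Davis--Gundy inequality, absorbing $\tfrac12\mathbb E\sup\|y\|_H^p$ into the left-hand side. The $L^2(0,\infty;\mathcal D(A^{1/2}))$ bound follows by combining the same identity at $p=2$, taken pathwise, with the explicit formula: $\int_0^\infty \|A^{1/2}S(t)y_0\|^2\,dt\le \tfrac12\|y_0\|^2$, weighted by $\sup_t M(t)^2$.

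\emph{Step 3: the time-uniform constant (main obstacle).} By the formula in Step 1, everything reduces to
\[
\mathbb E\Big[\sup_{t\ge0}e^{p(\beta_1W(t)-ct)}\Big],\qquad c=\lambda_1-\beta_0+\tfrac{\beta_1^2}{2}.
\]
For $c>0$ the quantity $\sup_t(\beta_1W(t)-ct)$ is exponentially distributed with rate $2c/\beta_1^2$. So this expectation equals $1+\frac{p\beta_1^2}{2c-p\beta_1^2}$ exactly when $p\beta_1^2<2c$, and it is infinite otherwise.

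This is where I expect the argument to break as worded. Because of the identity $y(t)=M(t)S(t)y_0$, no constant $C$ independent of $\beta_0,\beta_1$ can work on $[0,\infty)$. For instance, with $y_0=\phi_1$ deterministic and $\beta_1=0$, one gets $\|y(t)\|_H=e^{(\beta_0-\lambda_1)t}$, which is unbounded in $t$ whenever $\beta_0>\lambda_1$.

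So the honest conclusion of the plan is the following. Steps 1 and 2 give existence, uniqueness and the estimate with $C=C(T)$ on every finite horizon $[0,T]$; this is what \cite[Theorem 3.24]{Lue2021a} actually supplies. Step 3 shows that the bound holds on all of $[0,\infty)$ precisely when $p\beta_1^2<2(\lambda_1-\beta_0)+\beta_1^2$, with an explicit $C$ depending on $p,\beta_0,\beta_1,\lambda_1$. I do not see how to obtain the unconditional uniform statement; the scalar example above indicates that it is false without such a condition.
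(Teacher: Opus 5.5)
Your proposal does not prove the statement, and it is right not to: the statement is false as written. The paper's only justification is the sentence invoking \cite[Theorem 3.24]{Lue2021a}. That is a finite-horizon well-posedness result on $[0,T]$ with a constant $C=C(T)$, and nothing in the paper upgrades it to $[0,\infty)$.

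Your counterexample ($\beta_1=0$, $\beta_0>\lambda_1$, $y_0=\phi_1$, so $\|y(t)\|_H=e^{(\beta_0-\lambda_1)t}$) is valid. The claim also conflicts with the paper's own later content. For $y_0=\phi_1$ one has $\mathbb{E}\|y(t)\|_H^p=e^{-\mu_p t}$ with $\mu_p$ as in \eqref{mu_def_abstract}, which grows exponentially whenever \eqref{pthmoexcon} fails. This includes the noise-stabilized regime $\beta_0>\lambda_1$ of Theorem~\ref{as_stability_abstract}, which the numerical tests use ($\beta_0=100>\pi^4$); there $y\notin L^p_{\mathbb F}(\Omega;C([0,\infty);H))$.

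Your Step 3 also gives the correct repair. Your threshold $p\beta_1^2<2(\lambda_1-\beta_0)+\beta_1^2$ is exactly the paper's $p$-th moment condition \eqref{pthmoexcon}. Because $\mathbf F$ is generated by $W$, $\mathcal F_0$ is trivial and $y_0$ is a.s. deterministic. You also have $\|y(t)\|_H\le e^{\beta_1W(t)-ct}\|y_0\|_H$, with equality for $y_0=\phi_1$. Together these show the infinite-horizon estimate holds if and only if \eqref{pthmoexcon} holds.

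So the correct proposition is: unconditional on $[0,T]$ with $C=C(T)$, and on $[0,\infty)$ exactly under \eqref{pthmoexcon}. This correction costs the paper nothing. Theorem~\ref{pth_stability_abstract}, Proposition~\ref{prop5} and Theorem~\ref{as_stability_abstract} only use well-posedness and moment bounds on bounded intervals.

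There is one slip in Step 2. On $[0,\infty)$ you cannot weight by $\sup_{t\ge0}M(t)^2$, because $\sup_{t\ge0}M(t)=\infty$ a.s. as soon as $\beta_0>\beta_1^2/2$. Instead, move part of the semigroup decay onto the noise factor. With $c=\lambda_1-\beta_0+\beta_1^2/2$ and $0<\delta<\min\{\lambda_1,\mu_p/p\}$,
\[
\int_0^\infty\|A^{1/2}y(t)\|_H^2\,dt\le \sup_{s\ge0}e^{2(\beta_1W(s)-(c-\delta)s)}\sum_{k\ge1}\frac{\lambda_k|\langle y_0,\phi_k\rangle_H|^2}{2(\lambda_k-\lambda_1+\delta)}\le\frac{\lambda_1}{2\delta}\,\sup_{s\ge0}e^{2(\beta_1W(s)-(c-\delta)s)}\,\|y_0\|_H^2 .
\]
The $p/2$-th moment of this supremum is finite by your exponential-distribution computation with $c$ replaced by $c-\delta$, since $\delta<\mu_p/p$ is equivalent to $p\beta_1^2<2(c-\delta)$. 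On a finite horizon your Step 2 is fine as written.
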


In this context, stability can be interpreted in several ways. Two commonly used notions are $p$-th moment exponential stability (in particular, mean-square exponential stability when $p=2$), which concerns the decay of the expected $p$-th power of the norm of the solution, and almost sure exponential stability, which describes the long-time behaviour of individual sample paths; see, e.g., \cite{Mao2008,Caraballo2001} for further details. In the sequel, we fix $p \ge 1$.

\begin{df}[$p$-th moment exponential stability]\label{def:ms_stability}
The solution $y$ of \eqref{uncontrolled_for_eq} is said to be $p$-th moment exponentially stable if there exist positive constants $C$ and $\mu$ such that for all $y_0 \in L^p_{\mathcal{F}_0}(\Omega; H)$,
\begin{equation*}
\mathbb{E} \left( \|y(t)\|_{H}^p \right) \leq C e^{-\mu t} \mathbb{E} \left( \|y_0\|_{H}^p \right), \quad \text{for all } t \geq 0.
\end{equation*}
\end{df}

\begin{df}[Almost sure exponential stability]\label{def:as_stability}
The solution $y$ of \eqref{uncontrolled_for_eq} is said to be almost surely exponentially stable in the $p$-sense if there exist a  deterministic constant $\delta > 0$ and a finite random variable $\xi$ such that for all $y_0 \in L^p_{\mathcal{F}_0}(\Omega; H)$,
\[
\|y(t)\|_H^p \le \xi e^{-\delta t}, \quad \text{for all } t \geq 0,\quad \text{a.s.}
\]
\end{df}

Clearly, if $y_0 = 0$ almost surely, then the solution of \eqref{uncontrolled_for_eq} satisfies $y(t)=0$ for all $t \ge 0$, almost surely, and the above stability properties hold trivially. Hence, without loss of generality, throughout this paper we assume that $y_0 \neq 0$ almost surely. Then, by Lemma \ref{solrep}, the solution satisfies $y(t)\neq 0$ for all $t \ge 0$, almost surely.

The main objectives of this paper are as follows. The first aim is to study the impact of multiplicative noise on the long-time dynamics of \eqref{uncontrolled_for_eq}, and to establish explicit sufficient conditions ensuring $p$-th moment exponential stability and almost sure exponential stability, showing that both notions of stability are determined by the balance between the principal eigenvalue $\lambda_1$ of the operator $A$, the drift coefficient $\beta_0$, and the noise intensity $\beta_1$. The second aim is to construct a spectral Galerkin approximation of \eqref{uncontrolled_for_eq} combined with an implicit Euler--Maruyama scheme and to prove that the stability properties are preserved at the discrete level. The third aim is to apply our theoretical results to some SPDEs and to provide numerical simulations illustrating the theoretical findings and confirming the stability behaviour of the system.

\begin{rmk}
Let $1\leq q \leq p$. Using the standard moment inequality
\[
\left[\mathbb{E}(\|y(t)\|_H^q)\right]^{1/q}
\le
\left[\mathbb{E}(\|y(t)\|_H^p)\right]^{1/p},
\]
it follows that $p$-th moment exponential stability implies $q$-th moment exponential stability. More precisely, if equation \eqref{uncontrolled_for_eq} is $p$-th moment exponentially stable, then
\[
\mathbb{E}(\|y(t)\|_H^q)
\le
C^{q/p} e^{-(\mu q/p)t}
\left[\mathbb{E}(\|y_0\|_H^p)\right]^{q/p}.
\]
In particular, the solution of \eqref{uncontrolled_for_eq} is also $q$-th moment exponentially stable (with a possibly different decay rate and constant).
\end{rmk}
\begin{rmk}
The notion of almost sure exponential stability in Definition \ref{def:as_stability} has been used in the literature; see, e.g., \cite{yang2014pth}. It shows that almost every trajectory decays exponentially with a deterministic rate $\delta > 0$, while the random variable $\xi=\xi(\omega)$ reflects the dependence on the sample path and the initial data.
\end{rmk}
\begin{rmk}
It is straightforward to extend the stability results of this paper, in particular Theorems \ref{pth_stability_abstract} and \ref{as_stability_abstract}, to the case where $W(t) = (W^1(t),\ldots,W^d(t))_{t\ge 0}$ is a $d$-dimensional standard Brownian motion. In this setting, equation \eqref{uncontrolled_for_eq} takes the form
\begin{equation}\label{genesys}
\begin{cases}
dy(t) + Ay(t)\,dt = \beta_0 y(t)\,dt + \displaystyle\sum_{i=1}^d \beta_i y(t)\,dW^i(t), \quad t>0,\\
y(0)=y_0.
\end{cases}
\end{equation}
The stability results for \eqref{genesys} can be derived analogously to those for \eqref{uncontrolled_for_eq}. More precisely, in the main stability conditions \eqref{pthmoexcon} (for $p$-th moment exponential stability) and \eqref{alconst} (for almost sure exponential stability), the term $\beta_1^2$ should be  replaced by $\sum_{i=1}^d \beta_i^2$. Hence, all components of the multiplicative noise jointly determine the stability properties of the system \eqref{genesys} through their aggregated intensity. 
\end{rmk}
\begin{rmk}
The stability of stochastic evolution equations driven by cylindrical or $Q$-Wiener processes are well studied in the literature; see, e.g., \cite{da2014stochastic,Taniguchi1995,Haussmann1978}. We also refer to \cite{Ding2024} for the case of multiplicative fractional Brownian motion. The present work focuses on the simpler scalar multiplicative noise setting, which already captures the essential interplay between the operator spectrum, the drift, and the noise intensity.
\end{rmk}

The rest of the paper is organized as follows. In Section \ref{sec2sec}, we present some preliminary lemmas. Section \ref{sec2} is devoted to the derivation of explicit conditions ensuring $p$-th moment and almost sure exponential stability for equation \eqref{uncontrolled_for_eq}, discuss the relation between these two notions of stability, and apply the results to some classes of SPDEs. In Section \ref{sec3}, we introduce a spectral decomposition and construct a finite-dimensional approximation of \eqref{uncontrolled_for_eq} via a spectral Galerkin method, together with an implicit Euler--Maruyama scheme, and show that the stability properties are preserved by the discrete system. Numerical simulations are also provided to validate the theoretical results.

\section{Some Preliminaries}\label{sec2sec}

In this section, we present two preliminary lemmas that will play a key role in the subsequent analysis. The following lemma follows from an Itô transformation argument.
\begin{lm}\label{solrep}
Let $y_0 \in L^p_{\mathcal F_0}(\Omega;H)$ and let $y$ be the solution of \eqref{uncontrolled_for_eq} with initial condition $y_0$. Then, for all $t\geq0$,
\[
y(t)=\exp\!\left(\beta_1 W(t)-\frac{\beta_1^2}{2}t\right)\, S(t)y_0,\quad \textnormal{a.s.},
\]
where $(S(t))_{t\geq0}$ is the $C_0$-semigroup generated by $-A+\beta_0 I$. In particular, 
$$y_0 \neq 0, \quad \textnormal{a.s.}
\;\; \Longrightarrow \;\;
y(t)\neq 0,
\quad \textnormal{for all } t\geq0,\quad \textnormal{a.s.}$$
\end{lm}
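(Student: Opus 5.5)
The plan is to define the candidate process
\[
z(t):=\exp\!\Big(\beta_1 W(t)-\tfrac{\beta_1^2}{2}t\Big)\,S(t)y_0,\qquad t\ge 0,
\]
show that it is a weak solution of \eqref{uncontrolled_for_eq}, and then conclude $z=y$ by uniqueness. Note that $S$ here is the semigroup generated by $-A+\beta_0 I$, that is, $S(t)=e^{\beta_0 t}e^{-tA}$. Set $M(t):=\exp(\beta_1 W(t)-\frac{\beta_1^2}{2}t)$. By Itô's formula, $M$ is the scalar exponential martingale satisfying $dM(t)=\beta_1 M(t)\,dW(t)$ with $M(0)=1$. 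Moreover $z$ is $\mathbf F$-adapted and continuous in $H$, because $y_0$ is $\mathcal F_0$-measurable and $S$ is strongly continuous.

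Fix $\phi\in\mathcal D(A)$. By self-adjointness, $\langle S(t)y_0,\phi\rangle_H=\langle y_0,S(t)\phi\rangle_H$, and $g(t):=\langle S(t)y_0,\phi\rangle_H$ is a $C^1$ function of $t$ with
\[
g'(t)=\langle S(t)y_0,(-A+\beta_0)\phi\rangle_H .
\]
This holds pathwise, since the randomness of $y_0$ enters only through $\mathcal F_0$. The key step is the Itô product rule for the scalar semimartingale $M(t)g(t)$. Because $g$ is of finite variation and deterministic given $\mathcal F_0$, there is no cross-variation term, and
\[
d\big(M(t)g(t)\big)= M(t)\langle S(t)y_0,(-A+\beta_0)\phi\rangle_H\,dt+\beta_1 M(t)\langle S(t)y_0,\phi\rangle_H\,dW(t).
\]
Integrating from $0$ to $t$ gives exactly the weak formulation in the Definition with $y$ replaced by $z$. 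The stochastic integral is well defined: since $\|S(s)\|\le e^{\beta_0 s}$, we have $\mathbb E\int_0^t M(s)^2\|S(s)y_0\|_H^2\,ds<\infty$ whenever $y_0\in L^2$. For general $p\ge1$ one can argue locally (for instance on $\{\|y_0\|_H\le n\}\in\mathcal F_0$), or simply note that the integrand is continuous and adapted, so the integral exists as a local martingale.

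By \cite[Theorem 3.10]{Lue2021a}, weak and mild solutions coincide. By the uniqueness in the well-posedness Proposition (localized over the $\mathcal F_0$-sets $\{\|y_0\|_H\le n\}$ if needed), $y(t)=z(t)$ for all $t\ge0$ a.s.

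For the final claim, $M(t)>0$ for all $t$, and $e^{-tA}$ is injective: if $e^{-tA}y_0=0$ then $\sum_k e^{-2\lambda_k t}|\langle y_0,\phi_k\rangle_H|^2=0$, which forces $y_0=0$. Hence $y(t)\ne0$ for all $t$ on the full-measure event where $y_0\neq0$ and $y=z$.

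The main obstacle is the rigorous justification of the product rule in the weak formulation, in particular the fact that $g(t)$ depends on the random $y_0$. I would handle this by conditioning on $\mathcal F_0$, or by approximating $y_0$ by simple $\mathcal F_0$-measurable random variables. The rest is routine.
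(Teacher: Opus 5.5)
Your argument is correct, but it runs in the opposite direction from the paper's.

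The paper starts from the solution $y$ and sets $z(t)=\exp(-\beta_1W(t)+\frac{\beta_1^2}{2}t)\,y(t)$. It uses It\^o's formula to see that the noise term and the It\^o correction cancel, so $z$ solves the pathwise Cauchy problem $z'=(-A+\beta_0 I)z$, $z(0)=y_0$, and then reads off $z(t)=S(t)y_0$. That route is constructive: it derives the formula instead of guessing it. It needs only uniqueness for the deterministic Cauchy problem, applied $\omega$ by $\omega$. However, it applies It\^o's formula to the $H$-valued product $M(t)^{-1}y(t)$ while $y$ is only a weak/mild solution. A fully rigorous version would therefore itself have to pass through test functions $\phi\in\mathcal D(A)$ and uniqueness of weak solutions of the deterministic equation.

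Your guess--verify--uniqueness route keeps all the stochastic calculus scalar. $M(t)\langle S(t)y_0,\phi\rangle_H$ is a continuous semimartingale times a continuous adapted process that is $C^1$ in $t$. So the product rule holds with zero cross-variation, and the weak formulation follows at once. The price is that you invoke uniqueness for the stochastic equation rather than the deterministic one. The injectivity step is the same as in the paper, made explicit through $\sum_k e^{-2\lambda_k t}|\langle y_0,\phi_k\rangle_H|^2$.

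Two minor remarks. First, the ``main obstacle'' you flag is not one. The product rule applies directly to $M(t)g(t)$ because $g$ is adapted and pathwise $C^1$, so no conditioning or approximation is needed. Moreover, in this paper $\mathbf F$ is the augmented natural filtration of $W$, so $\mathcal F_0$ is trivial and $y_0$ is a.s.\ deterministic anyway.

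Second, apply uniqueness on each finite horizon, in $L^p_{\mathbb F}(\Omega;C([0,T];H))$. The class over $[0,\infty)$ in the well-posedness Proposition cannot contain the solution in general. For $y_0=\phi_1$ one gets $\mathbb E\|y(t)\|_H^p=e^{-\mu_p t}$, which blows up when $\mu_p<0$. By contrast, $z$ lies in every finite-horizon class, since $\sup_{t\le T}M(t)\le\exp(|\beta_1|\sup_{t\le T}|W(t)|)$ has moments of all orders. Uniqueness on every $[0,T]$, together with continuity of both processes, then gives $y=z$ for all $t\ge0$ outside a single null set.
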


\begin{proof}
Define the process
\[
z(t)=\exp\!\left(-\beta_1 W(t)+\frac{\beta_1^2}{2}t\right)y(t).
\]
Applying It\^o's formula to $z$ and using that $y$ satisfies \eqref{uncontrolled_for_eq}, we obtain that $z$ satisfies the random evolution equation
\begin{align*}
\begin{cases}
\displaystyle \frac{dz(t)}{dt}=(-A+\beta_0 I)z(t), \quad t>0,\\
z(0)=y_0.
\end{cases}
\end{align*}
Hence,
\[
z(t)=S(t)y_0,
\]
which implies that
\[
y(t)=\exp\!\left(\beta_1 W(t)-\frac{\beta_1^2}{2}t\right)\,S(t)y_0,
\quad \textnormal{for all } t\geq0,\quad \textnormal{a.s.}
\]
From the spectral decomposition \eqref{spectdecA} of the operator \(A\), it follows immediately that \(S(t)\) is injective for every \(t \geq 0\). Moreover,
\[
\exp\!\left(\beta_1 W(t)-\frac{\beta_1^2}{2}t\right)>0.
\]
Hence, it follows that $y(t)\neq 0$ for all $t\ge 0$ almost surely whenever $y_0\neq 0$ almost surely.
\end{proof}

We now derive the following It\^o formula, which will play an important role in the derivation of our stability results.
\begin{lm}\label{lmm2itfomu}
Let $y$ be the solution of \eqref{uncontrolled_for_eq} with initial condition $y_0 \in L^p_{\mathcal{F}_0}(\Omega; H)$. Then, for all $t\geq0$, the following identity holds:
\begin{align}\label{deItofor}
\begin{aligned}
\|y(t)\|_H^p
&= \|y_0\|_H^p
- p \int_0^t \|y(s)\|_H^{p-2} \langle Ay(s), y(s)\rangle_H \, ds \\
&\quad + \left(p \beta_0 + \frac{p(p-1)}{2}\beta_1^2\right)
\int_0^t \|y(s)\|_H^p \, ds \\
&\quad + p \beta_1 \int_0^t \|y(s)\|_H^p \, dW(s),\qquad \textnormal{a.s.}
\end{aligned}
\end{align}
Equivalently, in differential form, identity \eqref{deItofor} can be written as follows:
\begin{align}\label{diffformitofo}
\begin{aligned}
d\|y(t)\|_H^p
&= -p \|y(t)\|_H^{p-2} \langle Ay(t), y(t)\rangle_H \, dt + p \beta_1 \|y(t)\|_H^p \, dW(t)\\
&\quad + \left(p \beta_0 + \frac{p(p-1)}{2}\beta_1^2\right)\|y(t)\|_H^p \, dt,\qquad \textnormal{a.s.}
\end{aligned}
\end{align}
\end{lm}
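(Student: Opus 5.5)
The plan is to apply It\^o's formula to the functional $\Phi(x)=\|x\|_H^p$ along the solution $y$ of \eqref{uncontrolled_for_eq}. Since $y$ is only a weak/mild solution and $\Phi$ is not $C^2$ at the origin for $p<2$, I would regularize in two ways. To handle the unbounded operator $A$, I would project onto the finite-dimensional spaces $H_n=\mathrm{span}\{\phi_1,\dots,\phi_n\}$. To handle the singularity, I would replace $\Phi$ by $\Phi_\varepsilon(x)=(\|x\|_H^2+\varepsilon)^{p/2}$.

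\textbf{Step 1: the projected equation.} Let $P_n$ be the orthogonal projection onto $H_n$ and set $y_n=P_ny$. Testing the weak formulation with $\phi=\phi_k$, $k\le n$, and using \eqref{spectdecA} shows that each coordinate $y_k(t)=\langle y(t),\phi_k\rangle_H$ solves the scalar SDE
\[
dy_k=(\beta_0-\lambda_k)y_k\,dt+\beta_1 y_k\,dW.
\]
Hence $y_n$ is a finite-dimensional It\^o process with
\[
dy_n=(-Ay_n+\beta_0y_n)\,dt+\beta_1y_n\,dW .
\]

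\textbf{Step 2: the finite-dimensional It\^o formula.} Apply the classical It\^o formula to $\Phi_\varepsilon(y_n)$. The first and second derivatives are
\[
D\Phi_\varepsilon(x)=p(\|x\|^2+\varepsilon)^{p/2-1}x,\qquad
D^2\Phi_\varepsilon(x)=p(\|x\|^2+\varepsilon)^{p/2-1}I+p(p-2)(\|x\|^2+\varepsilon)^{p/2-2}x\otimes x .
\]
The It\^o correction is $\tfrac12\beta_1^2\langle D^2\Phi_\varepsilon(y_n)y_n,y_n\rangle$, which gives the $\tfrac{p(p-1)}{2}\beta_1^2$ coefficient in the limit. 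The drift term is $p(\cdot)^{p/2-1}\big(-\langle Ay_n,y_n\rangle+\beta_0\|y_n\|^2\big)$. The martingale term is $p\beta_1(\cdot)^{p/2-1}\|y_n\|^2\,dW$.

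\textbf{Step 3: the limits $n\to\infty$ and then $\varepsilon\to0$.} This is the main obstacle. For the limit in $n$:
\begin{enumerate}[(a)]
\item $y_n(t)\to y(t)$ in $H$ pathwise, uniformly on $[0,t]$ by continuity and dominated convergence in the coefficients.
\item $\langle Ay_n,y_n\rangle=\|A^{1/2}y_n\|^2$ increases monotonically to $\|A^{1/2}y\|^2$, which is integrable in time a.s.\ by the proposition's $L^2(0,\infty;\mathcal D(A^{1/2}))$ regularity. Monotone/dominated convergence then applies, reading $\langle Ay,y\rangle_H$ as $\|A^{1/2}y\|_H^2$.
\item The stochastic integrals converge in probability, since their integrands converge in $L^2(0,t)$ a.s.
\end{enumerate}
For the limit $\varepsilon\to0$, the factors $(\|y\|^2+\varepsilon)^{p/2-1}\|y\|^2$ and the corresponding terms converge to $\|y\|^p$ boundedly on compacts, because $\sup_s\|y(s)\|<\infty$ a.s. The cross terms like $\varepsilon(\cdot)^{p/2-2}\|y\|^2$ vanish in the limit, and the $\|y\|^{p-2}$ factor stays harmless since $y\neq0$ by Lemma \ref{solrep} (and in any case $\|y\|^{p-2}\|A^{1/2}y\|^2$ is handled by monotone convergence).

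\textbf{Shortcut via Lemma \ref{solrep}.} A cleaner alternative route would avoid the limiting procedure entirely. By Lemma \ref{solrep}, $\|y(t)\|^p=M(t)^p\,\|S(t)y_0\|^p$ with $M(t)=\exp(\beta_1W-\tfrac{\beta_1^2}{2}t)$. The factor $t\mapsto\|S(t)y_0\|^p$ is deterministic given $y_0$ and absolutely continuous for $t>0$ by analyticity, with derivative
\[
p\|S(t)y_0\|^{p-2}\big(-\langle AS(t)y_0,S(t)y_0\rangle+\beta_0\|S(t)y_0\|^2\big).
\]
The factor $M^p$ satisfies
\[
dM^p=M^p\Big(p\beta_1\,dW+\tfrac{p(p-1)}{2}\beta_1^2\,dt\Big).
\]
The product rule for a finite-variation process times a semimartingale then gives \eqref{deItofor} directly. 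The only care needed is integrability of $\langle AS(s)y_0,S(s)y_0\rangle$ near $s=0$, which follows from $\int_0^t\|A^{1/2}S(s)y_0\|^2\,ds\le C\|y_0\|^2$. I would present this second route as the main proof.
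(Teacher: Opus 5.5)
Your proposal is correct, and your main argument (the factorization via Lemma~\ref{solrep}) is genuinely different from the paper's proof.

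\textbf{The paper's route.} The paper applies the Hilbert-space It\^o formula directly to $f=h\circ g$, with $h(s)=s^{p/2}$ and $g(x)=\langle x,x\rangle_H$. It computes $D_xf$ and $D_x^2f$ and substitutes $dy=(-Ay+\beta_0y)\,dt+\beta_1y\,dW$. It does not address two issues:
\begin{enumerate}
\item For a mild/weak solution, $Ay$ need not be an integrable $H$-valued drift.
\item $D_x^2f$ contains $\|y\|_H^{p-4}$ and behaves like $\|y\|_H^{p-2}$, which is singular at $0$ when $p<2$.
\end{enumerate}
So that computation is formal and tacitly relies on $y(t)\neq0$.

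\textbf{Your factorization route.} You write $\|y(t)\|_H^p=M(t)^p\|S(t)y_0\|_H^p$. The entire It\^o correction $\frac{p(p-1)}{2}\beta_1^2$ then comes from the scalar identity for $dM^p$. The $-p\|y\|_H^{p-2}\langle Ay,y\rangle_H$ term comes from the classical derivative of the deterministic factor, via $M^p\|Sy_0\|_H^{p-2}\langle ASy_0,Sy_0\rangle_H=\|y\|_H^{p-2}\langle Ay,y\rangle_H$. The product rule joins the two, with no bracket term because that factor is continuous and of finite variation. This makes the identity \eqref{deItofor} rigorous using only elementary tools.

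One detail you should state explicitly. On $\{y_0\neq0\}$, the map $s\mapsto\|S(s)y_0\|_H$ is continuous and strictly positive on $[0,t]$ by injectivity of $S(s)$. This is what lets you differentiate $r\mapsto r^{p/2}$ for $p<2$. It also keeps $\|S(s)y_0\|_H^{p-2}$ bounded, so that together with your bound on $\int_0^t\|A^{1/2}S(s)y_0\|_H^2\,ds$ the derivative is integrable near $s=0$. On $\{y_0=0\}$ both sides of \eqref{deItofor} vanish.

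\textbf{What each approach buys.} Your factorization exploits a special structure: the noise is a scalar multiple of the identity and therefore commutes with $A$. The paper's computation is the template that extends to general It\^o processes, for example nonlinear drifts or noise operators not commuting with $A$. Your Galerkin-plus-$(\|x\|_H^2+\varepsilon)^{p/2}$ regularization (route~1) is a correct way to justify that template here: monotone convergence of $\|A^{1/2}P_ny\|_H^2$, Dini-type uniform convergence of $P_ny$, and convergence of the stochastic integrals in probability. It supplies exactly the justification the paper omits.
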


\begin{proof}
Let $t \in (0,T)$ and define $f(y(t))=\|y(t)\|_H^p$. By the classical differential form of Itô's formula (see, e.g., \cite[Theorem 2.139]{Lue2021a}), we have
\begin{equation}\label{Itoformlc}
df(y(t)) = D_xf(y(t))(dy(t)) + \frac{1}{2} D_x^2f(y(t))(dy(t),dy(t)),\qquad\textnormal{a.s.}
\end{equation}
By writing $f = h \circ g$, where
\[
h(s)=s^{p/2}, \qquad g(x)=\langle x,x\rangle_H,
\]
it is straightforward to see that
\begin{align}\label{dx1}
D_xf(y(t))(h)=p\|y(t)\|_H^{p-2}\langle y(t),h\rangle_H, \qquad \forall h\in H,
\end{align}
and
\begin{align}\label{dx2}
\begin{aligned}
D_x^2f(y(t))(h,k)
=&\, p(p-2)\|y(t)\|_H^{p-4}\langle y(t),h\rangle_H \langle y(t),k\rangle_H\\
&+ p\|y(t)\|_H^{p-2}\langle h,k\rangle_H,
\qquad \forall h,k\in H.
\end{aligned}
\end{align}
Hence, combining \eqref{dx1} and \eqref{dx2} with \eqref{Itoformlc}, it follows that
\begin{align}\label{Itoformlc2}
\begin{aligned}
df(y(t))
&= p\|y(t)\|_H^{p-2}\langle y(t),dy(t)\rangle_H \\
&\quad + \frac{p(p-2)}{2}\|y(t)\|_H^{p-4}|\langle y(t),dy(t)\rangle_H|^2 \\
&\quad + \frac{p}{2}\|y(t)\|_H^{p-2}\langle dy(t),dy(t)\rangle_H,\qquad\textnormal{a.s.}
\end{aligned}
\end{align}
Using that $y$ is the solution of \eqref{uncontrolled_for_eq} in \eqref{Itoformlc2}, we obtain
\begin{align}\label{Itoformlc2sec}
\begin{aligned}
df(y(t))
&= p\|y(t)\|_H^{p-2}\Big(-\langle y(t),Ay(t)\rangle_H dt+\beta_0 \|y(t)\|_H^{2}dt+\beta_1 \|y(t)\|_H^{2}dW(t)\Big)\\
&\quad + \frac{p(p-2)}{2}\beta_1^2\|y(t)\|_H^{p} dt + \frac{p}{2}\beta_1^2\|y(t)\|_H^{p} dt,\qquad\textnormal{a.s.}
\end{aligned}
\end{align}
Integrating \eqref{Itoformlc2sec} over $(0,t)$ gives the desired Itô formula \eqref{deItofor}. This completes the proof of Lemma \ref{lmm2itfomu}.
\end{proof}
\begin{rmk}
In what follows, instead of using the integral form of Itô's formula \eqref{deItofor}, we will typically use the differential form \eqref{diffformitofo}, as it simplifies the notation and reduces the complexity of many lengthy expressions.
\end{rmk}

\section{Exponential Stability of Equation \eqref{uncontrolled_for_eq}}\label{sec2}
In this section, we establish the $p$-th moment exponential stability and the almost sure exponential stability of equation \eqref{uncontrolled_for_eq}. We also investigate the relationship between these two notions of stability and present applications to several classes of SPDEs. 
\subsection{$p$-th Moment Exponential Stability}

In this subsection, we derive a sufficient condition on the parameters $\beta_0$, $\beta_1$, and $\lambda_1$ ensuring $p$-th moment exponential stability of the system \eqref{uncontrolled_for_eq}.
\begin{thm}\label{pth_stability_abstract}
Assume that the parameters $\beta_0$, $\beta_1$ and $\lambda_1$ satisfy the condition
\begin{align}\label{pthmoexcon}
 (p-1)\beta_1^2<2(\lambda_1 -  \beta_0).
 \end{align}
Then, for any initial condition $y_0 \in L^p_{\mathcal{F}_0}(\Omega; H)$, the unique solution $y$ of \eqref{uncontrolled_for_eq} is $p$-th moment exponentially stable. Specifically, the following estimate holds for all $t \geq 0$:
\begin{equation}\label{stability_res_abstract}
    \mathbb{E} \left( \|y(t)\|_H^p \right) \leq e^{-\mu_{p} t} \mathbb{E} \left( \|y_0\|_H^p \right),
\end{equation}
where the decay rate $\mu_{p}$ is given by
\begin{equation}\label{mu_def_abstract}
   \mu_{p} = p(\lambda_1 -  \beta_0) - \frac{p(p-1)}{2}\beta_1^2  > 0.
\end{equation}
\end{thm}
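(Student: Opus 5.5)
The plan is to take expectations in the Itô formula of Lemma \ref{lmm2itfomu}, bound the operator term with the spectral Poincaré inequality \eqref{spectralpoincareinequality}, and close with Grönwall's inequality. A cleaner variant is to apply Itô's product rule to $e^{\mu_p t}\|y(t)\|_H^p$. Using \eqref{diffformitofo} this gives
\begin{align*}
d\big(e^{\mu_p t}\|y(t)\|_H^p\big)
&= e^{\mu_p t}\Big(\mu_p\|y(t)\|_H^p - p\|y(t)\|_H^{p-2}\langle Ay(t),y(t)\rangle_H
+ \big(p\beta_0+\tfrac{p(p-1)}{2}\beta_1^2\big)\|y(t)\|_H^p\Big)dt\\
&\quad + p\beta_1 e^{\mu_p t}\|y(t)\|_H^p\,dW(t).
\end{align*}
By \eqref{spectralpoincareinequality} we have $-p\|y\|_H^{p-2}\langle Ay,y\rangle_H\le -p\lambda_1\|y\|_H^p$. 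The choice \eqref{mu_def_abstract} of $\mu_p$ then makes the $dt$-coefficient nonpositive. Hence
\[
e^{\mu_p t}\|y(t)\|_H^p\le \|y_0\|_H^p+p\beta_1\int_0^t e^{\mu_p s}\|y(s)\|_H^p\,dW(s) \quad \text{a.s.}
\]

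The Poincaré inequality is stated only on $\mathcal{D}(A)$, whereas $y(s)$ lies a priori only in $\mathcal{D}(A^{1/2})$ for a.e.\ $s$. Two justifications are available. One is to read $\langle Ay,y\rangle_H$ as $\|A^{1/2}y\|_H^2$ and note $\|A^{1/2}\varphi\|_H^2\ge\lambda_1\|\varphi\|_H^2$ on $\mathcal{D}(A^{1/2})$, by the eigen-expansion $\sum_k\lambda_k|\langle\varphi,\phi_k\rangle_H|^2$. The other is to bypass the issue through Lemma \ref{solrep}: $\|S(t)y_0\|_H\le e^{-(\lambda_1-\beta_0)t}\|y_0\|_H$ holds spectrally. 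I will use the spectral form. For $p<2$ the function $\|x\|^p$ is not $C^2$ at $0$, but Lemma \ref{solrep} guarantees $y(t)\neq0$, so Lemma \ref{lmm2itfomu} applies as stated.

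The main obstacle is showing that the stochastic integral has zero expectation, since we only know $y\in L^p_{\mathbb F}(\Omega;C([0,\infty);H))$, while a true martingale would require $\mathbb{E}\int_0^t\|y\|_H^{2p}ds<\infty$. I would localize with the stopping times $\tau_n=\inf\{t:\|y(t)\|_H\ge n\}$. On $[0,t\wedge\tau_n]$ the stopped integral is a genuine martingale, so
\[
\mathbb{E}\big(e^{\mu_p(t\wedge\tau_n)}\|y(t\wedge\tau_n)\|_H^p\big)\le\mathbb{E}\|y_0\|_H^p .
\]
Continuity of paths gives $\tau_n\to\infty$ a.s., and Fatou's lemma as $n\to\infty$ yields $e^{\mu_p t}\,\mathbb{E}\|y(t)\|_H^p\le\mathbb{E}\|y_0\|_H^p$, which is \eqref{stability_res_abstract}.

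A direct alternative is to compute from Lemma \ref{solrep}:
\[
\mathbb{E}\|y(t)\|_H^p=\mathbb{E}\Big[e^{p\beta_1W(t)-p\beta_1^2t/2}\Big]\,\mathbb{E}\|S(t)y_0\|_H^p ,
\]
using independence of $W(t)$ from $\mathcal F_0$. Since $\mathbb{E}\big[e^{p\beta_1W(t)-p\beta_1^2t/2}\big]=e^{p(p-1)\beta_1^2t/2}$, this gives the same rate immediately. Positivity $\mu_p>0$ is exactly condition \eqref{pthmoexcon}.
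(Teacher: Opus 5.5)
Your proposal is correct and its main line is essentially the paper's proof: Itô's formula for $e^{\mu_p t}\|y(t)\|_H^p$, the Poincaré inequality together with the choice of $\mu_p$ making the $dt$-term vanish, and then taking expectations. Your stopping-time localization with Fatou's lemma rigorously justifies the zero-mean step that the paper merely asserts (not strictly needed, since $\mathbb{E}\sup_{s\le t}\|y(s)\|_H^p<\infty$ and the Burkholder--Davis--Gundy inequality with exponent one already make the integral a true martingale), and your alternative via Lemma \ref{solrep} and $\mathbb{E}\,e^{p\beta_1W(t)-p\beta_1^2t/2}=e^{p(p-1)\beta_1^2t/2}$ is also valid, avoids the martingale and $p<2$ smoothness issues, and gives equality for $y_0\in\operatorname{span}\{\phi_1\}$, so the rate $\mu_p$ is sharp.
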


\begin{proof}
Let $t \geq 0$. By Itô's formula established in Lemma \ref{lmm2itfomu}, we have
\begin{align}\label{itf11}
\begin{aligned}
d\|y(t)\|_H^p
&= -p \|y(t)\|_H^{p-2} \langle Ay(t), y(t)\rangle_H \, dt + p \beta_1 \|y(t)\|_H^p \, dW(t)\\
&\quad + \left(p \beta_0 + \frac{p(p-1)}{2}\beta_1^2\right)\|y(t)\|_H^p \, dt,\quad \text{a.s.}
\end{aligned}
\end{align}
Next, we apply Itô's formula to the weighted process $t \mapsto e^{\mu_{p}t} \|y(t)\|_H^p$, where $\mu_p$ is the decay rate defined in \eqref{mu_def_abstract}. Using \eqref{itf11}, we obtain that
\begin{align*}
    d\left( e^{\mu_{p}t} \|y(t)\|_H^p \right) &= \mu_{p} e^{\mu_{p}t} \|y(t)\|_H^p dt + e^{\mu_{p}t} d\|y(t)\|_H^p \\
    &= \mu_{p} e^{\mu_{p}t} \|y(t)\|_H^p dt + e^{\mu_{p}t}\Big\{-p \|y(t)\|_H^{p-2} \langle Ay(t), y(t)\rangle_H \, dt \\
&\qquad+ p \beta_1 \|y(t)\|_H^p \, dW(t) + \left(p \beta_0 + \frac{p(p-1)}{2}\beta_1^2\right)\|y(t)\|_H^p \, dt\Big\}, \quad \text{a.s.}
\end{align*}
By the spectral Poincaré-type inequality \eqref{spectralpoincareinequality}, it follows that
\begin{align}\label{ineeabove16}
\begin{aligned}
    d\left( e^{\mu_{p}t} \|y(t)\|_H^p \right) &\leq e^{\mu_{p}t} \left( \mu_{p} - p\lambda_1 + p \beta_0 + \frac{p(p-1)}{2}\beta_1^2 \right) \|y(t)\|_H^p dt \\
    &\quad+ p \beta_1 \|y(t)\|_H^p \, dW(t), \quad \text{a.s.}
    \end{aligned}
\end{align}
Integrating \eqref{ineeabove16} over the time interval $[0, t]$ gives that
\begin{align}\label{ineqq2}
\begin{aligned}
    e^{\mu_{p}t} \|y(t)\|_H^p &\leq \|y_0\|_H^p + \int_0^t e^{\mu_{p}s} \left[ \mu_{p} - p\lambda_1 + p \beta_0 + \frac{p(p-1)}{2}\beta_1^2 \right] \|y(s)\|_H^p ds \\
    &\quad + p\beta_1 \int_0^t e^{\mu_{p}s} \|y(s)\|_H^p dW(s), \quad \textnormal{a.s.}
    \end{aligned}
\end{align}
Taking the expectation on both sides of \eqref{ineqq2} and noting that the stochastic integral is a martingale with zero mean, we obtain
\begin{align}\label{ineqq23}
    e^{\mu_{p}t} \mathbb{E}(\|y(t)\|_H^p) &\leq \mathbb{E}(\|y_0\|_H^p) + \left[ \mu_{p} - 
p(\lambda_1 -  \beta_0) + \frac{p(p-1)}{2}\beta_1^2 \right]  \mathbb{E}\bigg(\int_0^t e^{\mu_{p}s}\|y(s)\|_H^p ds\bigg).
\end{align}
According to the definition of the decay rate $\mu_{p}$ in \eqref{mu_def_abstract}, the second term on the right-hand side of \eqref{ineqq23} vanishes. Consequently, we obtain
$$
\mathbb{E}(\|y(t)\|_H^p) \leq e^{-\mu_{p}t} \mathbb{E}(\|y_0\|_H^p).
$$
This yields the desired stability estimate \eqref{stability_res_abstract}. This completes the proof of Theorem \ref{pth_stability_abstract}.
\end{proof}

\begin{rmk}\label{remarksonpmoments}
We now discuss some key aspects and consequences of Theorem \ref{pth_stability_abstract}.
\begin{itemize}
\item The $p$-th moment exponential stability result \eqref{stability_res_abstract} implies that the $p$-th moment of the solution decays exponentially fast. In particular, it follows that
\[
\gamma_p(y_0) := \limsup_{t \to \infty} \frac{1}{t} \log \mathbb{E}\,\big(\|y(t)\|_H^p\big) \leq -\mu_p < 0.
\]
In other words, the asymptotic logarithmic growth rate of the $p$-th moment, $\gamma_p(y_0)$, also called the $p$-th moment Lyapunov exponent of the solution (see, e.g., \cite{Mao2008}), is strictly negative.
\item The condition \eqref{pthmoexcon} reflects a balance between dissipation and stochastic forcing: stability holds when the effective dissipation $\lambda_1 - \beta_0$ dominates the noise intensity $\beta_1$, whose effect increases with $p$. The parameter $\beta_0$ shifts the damping, enhancing stability when negative and weakening it when positive. For $p=2$ (i.e., mean-square stability), or for general $p$ in the case of the stochastic heat equation, see, e.g., \cite{HernandezSantamaria2026,xie2008moment} and the references therein. The green region in Figure~\ref{fig:stable_colored} represents the set of all parameters $(\beta_1,\beta_0)\in\mathbb{R}^2$ satisfying the sufficient stability condition \eqref{pthmoexcon}, ensuring the moment exponential stability of \eqref{uncontrolled_for_eq}. Outside this region, stability is not guaranteed.
\begin{figure}[H]
\centering
\begin{tikzpicture}
\begin{axis}[
    width=12cm,
    height=8cm,
    xlabel={$\beta_1$},
    ylabel={$\beta_0$},
    xmin=-10, xmax=10,
    ymin=-4, ymax=12,
    axis lines=middle,
    grid=major,
    grid style={gray!20},
    xtick={-10,-5,0,5,10},
    ytick={-4,-2,0,2,4,6,8,10,12},
]
\pgfmathsetmacro{\lambdaone}{pi^2}
\addplot[
    name path=curve,
    blue,
    very thick,
    dashed,
    domain=-10:10,
    samples=200
] {\lambdaone - 0.5*x^2};
\addplot[name path=bottom, draw=none] {-4};
\addplot[
    green!60,
    opacity=0.35
] fill between[of=curve and bottom];
\addplot[
    name path=curveA,
    blue,
    very thick,
    dashed,
    domain=-10:10,
    samples=200
] {\lambdaone - 0.5*x^2};
\node[blue] at (axis cs:6.5,5.8)
{$\beta_0=\lambda_1-\frac{1}{2}\beta_1^2$};
\draw[blue,->,thick]
(axis cs:5.5,5.3) -- (axis cs:3.7,3.7);
\node at (axis cs:1.89,1.5) {\textbf{\large{$p$-th ME}}};
\node at (axis cs:2,0.5) {\textbf{\large{Stability}}};
\end{axis}
\end{tikzpicture}
\caption{$p$-th Moment exponential stability region in the $(\beta_1,\beta_0)$-plane for $p=2$ and $\lambda_1=\pi^2$.}
\label{fig:stable_colored}
\end{figure}
\item For fixed $p$, this can also be seen from Figure~\ref{fig:stable_colored}: as the principal eigenvalue $\lambda_1$ increases, the stability boundary shifts upward and the admissible parameter set $(\beta_1,\beta_0)$ enlarges, reflecting stronger dissipation that allows for larger noise intensity.

\item The condition \eqref{pthmoexcon} becomes more restrictive as \(p\) increases; higher moments therefore require stronger dissipation for stability. As shown in Figure~\ref{fig:p_effect}, the admissible region below the curve \(\beta_1 \mapsto \lambda_1 - \frac{p-1}{2}\beta_1^2\) shrinks as \(p\) grows. For \(p=1\), the noise contribution disappears and the condition reduces to \(\beta_0 < \lambda_1\), corresponding to the deterministic case. This is also reflected in the decay rate
\[
\mu_p = p(\lambda_1 - \beta_0) - \frac{p(p-1)}{2}\beta_1^2,
\]
where the linear stabilizing term \(p(\lambda_1-\beta_0)\) competes with the quadratic destabilizing noise term \(\frac{p(p-1)}{2}\beta_1^2\).
\begin{figure}[H]
\centering
\begin{tikzpicture}
\begin{axis}[
    width=12cm,
    height=8cm,
    xlabel={$\beta_1$},
    ylabel={$\beta_0$},
    xmin=-10, xmax=10,
    ymin=-4, ymax=12,
    axis lines=middle,
    grid=major,
    grid style={gray!20},
    xtick={-10,-5,0,5,10},
    ytick={-4,-2,0,2,4,6,8,10,12},
    legend style={at={(0.01,0.73)}, anchor=north west}
]
\pgfmathsetmacro{\lambdaone}{pi^2}
\addplot[
    name path=curveD,
    green!60!black,
    very thick,
     dotted,
    domain=-10:10,
    samples=200
] {\lambdaone};
\addlegendentry{$p=1$}
\addplot[
    name path=curveA,
    blue,
    very thick,
    dashed,
    domain=-10:10,
    samples=200
] {\lambdaone - 0.5*x^2};
\addlegendentry{$p=2$}
\addplot[
    name path=curveB,
    red,
    very thick,
    dashed,
    domain=-10:10,
    samples=200
] {\lambdaone - x^2};
\addlegendentry{$p=3$}
\addplot[
    name path=curveC,
    black,
    very thick,
    dotted,
    domain=-10:10,
    samples=200
] {\lambdaone - 1.5*x^2};
\addlegendentry{$p=4$}
\addplot[name path=bottom, draw=none] {-4};
\end{axis}
\end{tikzpicture}
\caption{Effect of increasing $p$ on the stability boundary, with $\lambda_1=\pi^2$.}
\label{fig:p_effect}
\end{figure}
\end{itemize}    
\end{rmk}

\subsection{Relation between $p$-th Moment and Almost Sure Exponential Stability}
In this subsection, we show that the $p$-th moment exponential stability of equation \eqref{uncontrolled_for_eq} is a stronger property than almost sure exponential stability. The proof builds on and extends ideas from \cite{Mao2008}, originally developed in the context of stochastic differential equations. This demonstrates that $p$-th moment exponential stability criteria may fail to capture the intrinsic stabilizing effect of multiplicative noise, which is instead revealed through almost sure stability analysis.
\begin{prop}\label{prop5}
The $p$-th moment exponential stability of equation \eqref{uncontrolled_for_eq} implies almost sure exponential stability.
\end{prop}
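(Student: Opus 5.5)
We follow Mao's classical argument: combine the moment decay with a Burkholder--Davis--Gundy estimate on unit time intervals, then apply Borel--Cantelli.

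Assume that, for some $C,\mu>0$, we have $\mathbb{E}(\|y(t)\|_H^p)\le Ce^{-\mu t}\mathbb{E}(\|y_0\|_H^p)$ for all $t\ge0$. Fix $n\in\mathbb{N}$ and work on $[n,n+1]$. Write the integral form of the Itô formula \eqref{deItofor} of Lemma \ref{lmm2itfomu} between $n$ and $t\in[n,n+1]$. Since $\langle Ay,y\rangle_H\ge\lambda_1\|y\|_H^2\ge0$ by \eqref{spectralpoincareinequality}, the $A$-term is nonpositive and can be dropped. This gives
\[
\sup_{n\le t\le n+1}\|y(t)\|_H^p\le \|y(n)\|_H^p+K\int_n^{n+1}\|y(s)\|_H^p\,ds+p|\beta_1|\sup_{n\le t\le n+1}\Big|\int_n^t\|y(s)\|_H^p\,dW(s)\Big|,
\]
with $K=|p\beta_0+\tfrac{p(p-1)}{2}\beta_1^2|$. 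Taking expectations, the first two terms are bounded by $(1+K)Ce^{-\mu n}\mathbb{E}\|y_0\|_H^p$.

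For the martingale term, use the Burkholder--Davis--Gundy inequality with exponent one. It gives the bound $c\,\mathbb{E}\big(\int_n^{n+1}\|y\|_H^{2p}ds\big)^{1/2}$, which is at most $c\,\mathbb{E}\big(\sup_{[n,n+1]}\|y\|_H^p\cdot\int_n^{n+1}\|y\|_H^pds\big)^{1/2}$. Then apply $\sqrt{ab}\le \varepsilon a+\tfrac{1}{4\varepsilon}b$ and choose $\varepsilon$ so that $p|\beta_1|c\varepsilon\le 1/2$. This lets us absorb half of $\mathbb{E}\sup\|y\|_H^p$ into the left side, provided $\mathbb{E}\sup_{[n,n+1]}\|y\|_H^p<\infty$. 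That finiteness is guaranteed by the well-posedness Proposition. We obtain
\[
\mathbb{E}\Big(\sup_{n\le t\le n+1}\|y(t)\|_H^p\Big)\le C'e^{-\mu n}\mathbb{E}\|y_0\|_H^p.
\]
This step is the main technical point. Once done, the rest is routine.

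Next fix $\delta\in(0,\mu)$. Chebyshev's inequality gives
\[
\mathbb{P}\Big(\sup_{[n,n+1]}\|y\|_H^p>e^{-\delta n}\Big)\le C'e^{-(\mu-\delta)n}\mathbb{E}\|y_0\|_H^p,
\]
which is summable in $n$. By Borel--Cantelli, for almost every $\omega$ there is $n_0(\omega)$ such that $\sup_{[n,n+1]}\|y\|_H^p\le e^{-\delta n}\le e^{\delta}e^{-\delta t}$ for all $n\ge n_0$ and $t\in[n,n+1]$.

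Finally define
\[
\xi:=e^{\delta}\max\Big(1,\sup_{0\le t\le n_0+1}e^{\delta t}\|y(t)\|_H^p\Big).
\]
It is finite almost surely by path continuity of $y$, and it gives $\|y(t)\|_H^p\le\xi e^{-\delta t}$ for all $t\ge0$ almost surely. This is exactly Definition \ref{def:as_stability}.

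As an alternative, one could use Lemma \ref{solrep} directly: $\|y(t)\|_H\le e^{\beta_1W(t)-\beta_1^2t/2-(\lambda_1-\beta_0)t}\|y_0\|_H$, together with $W(t)/t\to0$. However, the statement concerns abstract moment stability rather than the specific parameter condition \eqref{pthmoexcon}, so the Borel--Cantelli route above is the one we commit to.
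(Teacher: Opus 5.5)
Your proof is correct and follows essentially the same route as the paper. Both arguments apply It\^o's formula on unit intervals, use Burkholder--Davis--Gundy together with Young's inequality to absorb half of $\mathbb{E}\sup_{[n,n+1]}\|y\|_H^p$, then apply Markov's inequality and Borel--Cantelli, and finally build the pathwise constant $\xi$ from path continuity on $[0,n_0+1]$. Your explicit check that $\mathbb{E}\sup_{[n,n+1]}\|y\|_H^p<\infty$ before absorbing, and your dropping of the nonpositive $A$-term rather than bounding it by $p\lambda_1\|y\|_H^p$, are small improvements over the paper's write-up.
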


\begin{proof}
Assume that the $p$-th moment exponential stability of equation \eqref{uncontrolled_for_eq} holds. Then, there exist constants $C,\mu>0$ such that for any $y_0 \in L^p_{\mathcal{F}_0}(\Omega; H)$ and all $t \geq 0$,
\begin{equation}\label{pmoexpostaine}
    \mathbb{E} \left( \|y(t)\|_H^p \right) \leq C e^{-\mu t} \mathbb{E} \left( \|y_0\|_H^p \right).
\end{equation}
Fix an integer $n\geq 1$. By Itô's formula in Lemma \ref{lmm2itfomu}, we have for any $n-1\leq t\leq n$,
\begin{align*}
\begin{aligned}
\|y(t)\|_H^p
&= \|y(n-1)\|_H^p-\int_{n-1}^t p \|y(s)\|_H^{p-2} \langle Ay(s), y(s)\rangle_H \, ds\\
&\quad + \int_{n-1}^t \left(p \beta_0 + \frac{p(p-1)}{2}\beta_1^2\right)\|y(s)\|_H^p \, ds\\
&\quad+ \int_{n-1}^t p \beta_1 \|y(s)\|_H^p \, dW(s),\qquad \textnormal{a.s.}
\end{aligned}
\end{align*}
Using the spectral Poincaré-type inequality \eqref{spectralpoincareinequality}, we obtain
\begin{align}\label{ineqqas.sa}
\begin{aligned}
\|y(t)\|_H^p
&\leq \|y(n-1)\|_H^p+\kappa_1\int_{n-1}^t \|y(s)\|_H^p \, ds\\
&\quad+ \int_{n-1}^t p \beta_1 \|y(s)\|_H^p \, dW(s),\qquad \textnormal{a.s.},
\end{aligned}
\end{align}
where $\kappa_1=\lambda_1p+p|\beta_0|+\frac{p(p-1)}{2}\beta_1^2$. It then follows from \eqref{ineqqas.sa} that
\begin{align}\label{ineqqaf266}
\begin{aligned}
\mathbb{E}\Big(\sup_{n-1\leq t\leq n}\|y(t)\|_H^p\Big)
&\leq \mathbb{E}(\|y(n-1)\|_H^p)+\kappa_1\int_{n-1}^n \mathbb{E}\|y(s)\|_H^p \, ds\\
&\quad+ \mathbb{E}\bigg(\sup_{n-1\leq t\leq n}\bigg|\int_{n-1}^t p \beta_1 \|y(s)\|_H^p \, dW(s)\bigg|\bigg).
\end{aligned}
\end{align}
For the third term on the right-hand side of \eqref{ineqqaf266}, we apply the Burkholder–Davis–Gundy inequality (see, e.g., \cite[Theorem 2.140]{Lue2021a}). Then, there exists a positive constant $c$ such that
\begin{align*}
\begin{aligned}
\mathbb{E}\Big(\sup_{n-1\leq t\leq n}\|y(t)\|_H^p\Big)
&\leq \mathbb{E}(\|y(n-1)\|_H^p)+\kappa_1\int_{n-1}^n \mathbb{E}\|y(s)\|_H^p \, ds\\
&\quad+ c\,\mathbb{E}\bigg(\int_{n-1}^n p^2 \beta_1^2 \|y(s)\|_H^{2p} \, ds\bigg)^{1/2},
\end{aligned}
\end{align*}
which implies that
\begin{align}\label{ineqqbefyug}
\begin{aligned}
\mathbb{E}\Big(\sup_{n-1\leq t\leq n}\|y(t)\|_H^p\Big)
&\leq \mathbb{E}(\|y(n-1)\|_H^p)+\kappa_1\int_{n-1}^n \mathbb{E}\|y(s)\|_H^p \, ds\\
&\quad+ \mathbb{E}\bigg( \sup_{n-1\leq s\leq n}\|y(s)\|_H^{p}\int_{n-1}^n p^2 c^2\beta_1^2 \|y(s)\|_H^{p} \, ds\bigg)^{1/2}.
\end{aligned}
\end{align}
Applying the inequality $\sqrt{ab}\leq\frac{1}{2}(a+b)$ to the third term on the right-hand side of \eqref{ineqqbefyug}, we obtain
\begin{align*}
\begin{aligned}
\mathbb{E}\Big(\sup_{n-1\leq t\leq n}\|y(t)\|_H^p\Big)
&\leq \mathbb{E}(\|y(n-1)\|_H^p)+\kappa_1\int_{n-1}^n \mathbb{E}\|y(s)\|_H^p \, ds\\
&\quad+\frac{1}{2}\mathbb{E}\Big(\sup_{n-1\leq s\leq n}\|y(s)\|_H^p\Big)+ \frac{1}{2}\int_{n-1}^n p^2 c^2\beta_1^2 \mathbb{E}\|y(s)\|_H^{p} \, ds.
\end{aligned}
\end{align*}
This implies that
\begin{align*}
\begin{aligned}
\mathbb{E}\Big(\sup_{n-1\leq t\leq n}\|y(t)\|_H^p\Big)
&\leq 2\mathbb{E}(\|y(n-1)\|_H^p)+(2\kappa_1+p^2 c^2\beta_1^2) \int_{n-1}^n  \mathbb{E}\|y(s)\|_H^{p} \, ds.
\end{aligned}
\end{align*}
By \eqref{pmoexpostaine}, we obtain
\begin{align*}
\begin{aligned}
\mathbb{E}\Big(\sup_{n-1\leq t\leq n}\|y(t)\|_H^p\Big)
&\leq \Big(2C +(2\kappa_1+p^2c^2 \beta_1^2)\frac{1-e^{-\mu}}{\mu} \Big) e^{-\mu(n-1)}\mathbb{E}(\|y_0\|_H^p).
\end{aligned}
\end{align*}
Using the inequality \(1-e^{-a}\leq a\), we deduce that
\begin{align}\label{ineabomarine}
\begin{aligned}
\mathbb{E}\Big(\sup_{n-1\leq t\leq n}\|y(t)\|_H^p\Big)
&\leq \kappa_2 e^{-\mu(n-1)}\mathbb{E}(\|y_0\|_H^p),
\end{aligned}
\end{align}
where \(\kappa_2= 2C +2\kappa_1+p^2 c^2\beta_1^2\). By Markov’s inequality, for any $\varepsilon \in (0,\mu)$, we have
\[
\mathbb{P}\bigg(\sup_{n-1\leq t\leq n}\|y(t)\|_H^p > e^{-(\mu-\varepsilon)(n-1)} \bigg)
\leq e^{(\mu-\varepsilon)(n-1)} \mathbb{E}\Big(\sup_{n-1\leq t\leq n}\|y(t)\|_H^p\Big),
\]
which, together with \eqref{ineabomarine}, implies
\begin{align*}
\mathbb{P}\bigg(\sup_{n-1\leq t\leq n}\|y(t)\|_H^p > e^{-(\mu-\varepsilon)(n-1)} \bigg)
\leq \kappa_2 e^{-\varepsilon(n-1)}\mathbb{E}(\|y_0\|_H^p).
\end{align*}
By the Borel–Cantelli lemma, it follows that for almost all $\omega \in \Omega$, there exists an integer $n_0=n_0(\omega)\geq 1$ such that for all $n \geq n_0$, we have
\[
\sup_{n-1\leq t\leq n}\|y(t,\omega)\|_H^p \leq e^{-(\mu-\varepsilon)(n-1)}.
\]
Define the random time $T(\omega):=n_0(\omega)$. Then for all $t > T(\omega)$, there exists an integer $n$ such that $n-1\leq t\leq n$. Hence,
\[
\|y(t,\omega)\|_H^p \leq e^{-(\mu-\varepsilon)(n-1)}, \quad t>T(\omega).
\]
Since $n-1 \geq t-1$, we obtain
\[
\|y(t,\omega)\|_H^p \leq e^{-(\mu-\varepsilon)(t-1)}
= e^{\mu-\varepsilon}\, e^{-(\mu-\varepsilon)t}, \quad t>T(\omega).
\]
Now choose $\varepsilon=\frac{\mu}{2}$. Then
\[
\|y(t,\omega)\|_H^p \leq e^{\frac{\mu}{2}} e^{-\frac{\mu}{2}t}, \quad t>T(\omega).
\]
Define
\[
\xi(\omega):=\max\left\{e^{\frac{\mu}{2}},\ \sup_{0\leq t\leq T(\omega)} \left(e^{\frac{\mu}{2}t}\|y(t,\omega)\|_H^p\right)\right\}.
\]
Since the mapping $t \mapsto \|y(t,\omega)\|_H$ is continuous, it follows that $\xi(\omega)<\infty$. Therefore, for all $t\geq 0$, we obtain
\[
\|y(t,\omega)\|_H^p \leq \xi(\omega)\, e^{-\frac{\mu}{2} t}.
\]
This implies that equation \eqref{uncontrolled_for_eq} is almost surely exponentially stable, which completes the proof of Proposition \ref{prop5}.
\end{proof}
\begin{rmk}
In Proposition \ref{prop5}, we proved that $p$-th moment exponential stability implies almost sure exponential stability for equation \eqref{uncontrolled_for_eq}. The reverse implication does not hold in general, since almost sure stability is a pathwise property concerning $\|y(t,\omega)\|_H$ for almost every fixed $\omega \in \Omega$, whereas $p$-th moment exponential stability depends on the averaged quantity $\mathbb{E}(\|y(t)\|_H^p)$ over all $\omega \in \Omega$. Thus, rare but large fluctuations may not affect almost sure decay, but can prevent decay of moments.
\end{rmk}

\subsection{Almost Sure Exponential Stability}
Although $p$-th moment stability ensures almost sure exponential stability, it requires the same conditions on the parameters. To expand the stability domain and capture noise-induced stabilization, we now derive a sharper sufficient condition on $\beta_0$, $\beta_1$, and $\lambda_1$ for the almost-sure exponential stability of system \eqref{uncontrolled_for_eq}.

\begin{thm}\label{as_stability_abstract}
Assume that the parameters $\beta_0$, $\beta_1$, and $\lambda_1$ satisfy the condition  
\begin{align}\label{alconst}
\beta_1^2 > 2(\beta_0 - \lambda_1). 
\end{align}
Then, there exists a finite random variable $\xi$ such that for all $y_0 \in L^p_{\mathcal{F}_0}(\Omega; H)$, the solution $y$ of \eqref{uncontrolled_for_eq} satisfies that
\begin{equation}\label{as_stability_res_abstract}
\|y(t)\|_H^p \le \xi\, e^{-\frac{\mu_{as}}{2} t}, \quad \text{for all } t \geq 0,\quad \textnormal{a.s.},
\end{equation}
where the almost sure decay rate is given by
\begin{equation}\label{mu_as_def_abstract}
\mu_{as} = \frac{p}{2}\beta_1^2 + p(\lambda_1 - \beta_0) > 0.
\end{equation}
\end{thm}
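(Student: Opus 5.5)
Since $y_0\neq 0$ a.s. and, by Lemma \ref{solrep}, $y(t)\neq 0$ for all $t\ge0$ a.s., we may work with $\log\|y(t)\|_H^p$. There are two routes. One is to use the explicit representation of Lemma \ref{solrep} directly. The other, which is more in the spirit of the section, is to apply Itô's formula to $\log\|y(t)\|_H^p$ via Lemma \ref{lmm2itfomu}. We follow the second. Applying Itô's formula to $\log$ composed with \eqref{diffformitofo}, the quadratic-variation correction is $-\frac12 p^2\beta_1^2\,dt$. This gives
\begin{align*}
d\log\|y(t)\|_H^p = \Big(-p\frac{\langle Ay(t),y(t)\rangle_H}{\|y(t)\|_H^2} + p\beta_0 + \frac{p(p-1)}{2}\beta_1^2 - \frac{p^2}{2}\beta_1^2\Big)dt + p\beta_1\, dW(t).
\end{align*}
The drift constant simplifies to $p\beta_0 - \frac p2\beta_1^2$. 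By the Poincaré inequality \eqref{spectralpoincareinequality}, integrating over $[0,t]$ yields
\begin{align*}
\log\|y(t)\|_H^p \le \log\|y_0\|_H^p - \mu_{as}\, t + p\beta_1 W(t),\quad \text{a.s.},
\end{align*}
with $\mu_{as}$ as in \eqref{mu_as_def_abstract}. Condition \eqref{alconst} is exactly $\mu_{as}>0$.

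The noise term $p\beta_1W(t)$ has to be absorbed into half of the decay. We would use the strong law of large numbers for Brownian motion, $W(t)/t\to0$ a.s. (or the law of the iterated logarithm). From it we get a.s. finiteness of the random variable
\[
\eta(\omega):=\sup_{t\ge0}\Big(p\beta_1W(t,\omega)-\frac{\mu_{as}}{2}t\Big).
\]
This variable is finite because the expression is continuous in $t$ and tends to $-\infty$. In fact its law is explicit: it is exponential with parameter $\mu_{as}/(p^2\beta_1^2)$ when $\beta_1\ne0$, and $\eta=0$ when $\beta_1=0$. We then obtain
\[
\|y(t)\|_H^p\le \|y_0\|_H^p\, e^{\eta}\, e^{-\frac{\mu_{as}}{2}t},
\]
and set $\xi:=\|y_0\|_H^p e^{\eta}$. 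This $\xi$ is finite a.s. since $y_0\in L^p_{\mathcal F_0}(\Omega;H)$ gives $\|y_0\|_H<\infty$ a.s.

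The main obstacle is rigor in applying Itô's formula to $\log\|y\|_H^p$. The function $\log$ is singular at $0$, and $y$ is only a weak solution, so $\langle Ay,y\rangle_H$ must be interpreted as $\|A^{1/2}y\|_H^2$. We would handle this either by a stopping-time and $\log(\epsilon+\cdot)$ regularization combined with a Galerkin/Yosida approximation, or bypass it entirely with Lemma \ref{solrep}. By that lemma, $\|y(t)\|_H=\exp(\beta_1W(t)-\frac{\beta_1^2}{2}t)\|S(t)y_0\|_H$. The spectral expansion gives $\|S(t)y_0\|_H\le e^{-(\lambda_1-\beta_0)t}\|y_0\|_H$. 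Raising to the power $p$ reproduces exactly the same pathwise bound, so the proof reduces cleanly to the Brownian supremum estimate above.
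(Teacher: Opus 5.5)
Your proof is correct, but it takes a different route from the paper's.

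\textbf{The paper's route.} The paper does not use the closed form of Lemma~\ref{solrep} in this proof. It applies It\^o's formula to the regularized functional $\log\bigl(\|y(t)\|_H^p+e^{-\eta t}\bigr)$; the term $e^{-\eta t}$ keeps the logarithm away from its singularity. It controls the martingale part pathwise on intervals $[0,N]$ via the exponential martingale inequality and Borel--Cantelli. It then tunes $\eta$ so that an auxiliary rational function of $z(s)=e^{\eta s}\|y(s)\|_H^p$ is increasing and bounded by its limit at infinity. Letting $\varepsilon\to0$ gives $\limsup_{t\to\infty}\frac1t\log\|y(t)\|_H^p\le-\mu_{as}$, and $\xi$ is assembled afterwards from a random time $T(\omega)$.

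\textbf{Your route.} You get the pathwise bound $\|y(t)\|_H^p\le\|y_0\|_H^p\exp\bigl(p\beta_1W(t)-\mu_{as}t\bigr)$ for all $t\ge0$ in one step, from Lemma~\ref{solrep} and the spectral estimate for the semigroup generated by $-A+\beta_0I$. The only probabilistic input is the a.s.\ finiteness of $\sup_{t\ge0}\bigl(p\beta_1W(t)-\frac{\mu_{as}}{2}t\bigr)$.

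\textbf{What your route buys.} It is much shorter. It sidesteps any justification of It\^o's formula for $\log\|y\|_H^p$ for a weak solution, so relying on Lemma~\ref{solrep} rather than your first, formal computation is the right call. It also gives a sharper conclusion:
\begin{itemize}
\item $\xi=\|y_0\|_H^pe^{\eta}$ factors as the initial datum times a random variable that does not depend on $y_0$ and has an explicit exponential law. This is the most one can ask for in the quantifier ``there exists $\xi$ \dots\ for all $y_0$'': by linearity (replace $y_0$ by $cy_0$ and look at $t=0$), $\xi$ cannot be entirely independent of $y_0$.
\item The rate $\frac{\mu_{as}}{2}$ may be replaced by any $\delta<\mu_{as}$.
\end{itemize}

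\textbf{What the paper's route buys.} Its Lyapunov-type argument does not depend on the explicit solution formula. It uses only the It\^o identity for $\|y\|_H^p$, the Poincar\'e inequality, and the bound $[M_p]_t\le p^2\beta_1^2t$ on the quadratic variation. It is therefore the template that survives when the noise coefficient does not commute with $A$ or is nonlinear, where no representation like Lemma~\ref{solrep} is available.
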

\begin{proof}
Let $t \geq 0$ and let $\eta > 0$ be a parameter to be chosen later in the proof. We define the auxiliary stochastic process
$$
S_p(t) := \|y(t)\|_H^p + e^{-\eta t}, \quad \text{for all } t \geq 0.
$$
Applying Itô's formula to the process $\log S_p(t)$, we obtain
$$
d\log S_p(t) = \frac{1}{S_p(t)} dS_p(t) -\frac{1}{2S^2_p(t)}(dS_p(t))^2, \quad \text{a.s.}, 
$$
where
$$
dS_p(t)=-\eta e^{-\eta t}dt+d\|y(t)\|^p_H, \quad \text{a.s.} 
$$
Now, using Itô's formula from Lemma \ref{lmm2itfomu}, we deduce that
\begin{align}\label{equali188}
\begin{aligned}
   d\log S_p(t) =& \frac{1}{S_p(t)} \bigg[-\eta e^{-\eta t} -p \|y(t)\|_H^{p-2} \langle Ay(t), y(t)\rangle_H \,  \\
&\hspace{1.5cm} + \left(p \beta_0 + \frac{p(p-1)}{2}\beta_1^2 \right)\|y(t)\|_H^p \, \bigg] dt \\
& 
 -\frac{p^2\beta_1^2}{2S^2_p(t)}\|y(t)\|^{2p}_H dt+ \frac{p \beta_1}{S_p(t)} \|y(t)\|_H^p \, dW(t), \quad \text{a.s.} 
\end{aligned}
\end{align}
Integrating \eqref{equali188} over the time interval $(0,t)$ and using the Poincar\'e inequality \eqref{spectralpoincareinequality}, we obtain
\begin{align}\label{logS_int_abstract} 
\begin{aligned}
\log S_p(t) &\leq \log (\|y_0\|_H^p + 1) -\frac{p^2\beta_1^2}{2}\int_0^t \frac{\|y(s)\|^{2p}_H}{S^2_p(s)} ds + M_p(t)\\
&\quad+ \int_0^t \frac{1}{S_p(s)}\bigg[-\eta e^{-\eta s} + \bigg( p \beta_0 + \frac{p(p-1)}{2}\beta_1^2 -p\lambda_1\bigg)\|y(s)\|_H^p\bigg] ds, \quad \text{a.s.},
\end{aligned}
\end{align}
where
$$
M_p(t) := \int_0^t \frac{p \beta_1}{S_p(s)} \|y(s)\|_H^p \, dW(s),
$$
is a continuous martingale with initial value $M_p(0)=0$, whose quadratic variation is given by
\[
[M_p]_t = \int_0^t \frac{p^2\beta_1^2}{S_p^2(s)} \|y(s)\|_H^{2p} ds \leq p^2\beta_1^2 t < +\infty.
\]
By the exponential martingale inequality (see, e.g., \cite{Mao2008}), we have that for any positive numbers $T$, $\varepsilon$, and $k$,
$$
\mathbb{P}\Big(\max_{0\leq t\leq T}\Big[M_p(t)-\frac{\varepsilon}{2}[M_p]_t\Big]> k\Big)\leq e^{-\varepsilon k}.
$$
We now choose an arbitrary $\varepsilon \in (0,1/2p)$ and set $k=\frac{2\log N}{\varepsilon}$, where $N \geq 1$ is an integer. Then, it follows that
$$
\mathbb{P}\bigg(\max_{0\leq t\leq T}\Big[M_p(t)-\frac{\varepsilon}{2}[M_p]_t\Big]> \frac{2\log N}{\varepsilon}\bigg)\leq \frac{1}{N^2}.
$$
By the Borel–Cantelli lemma, it follows that for almost all $\omega\in\Omega$, there exists an integer $N_0=N_0(\omega)\geq 1$ such that for all $N \geq N_0$ and all $t \in [0,N]$, we have
$$
M_p(t) \leq \frac{\varepsilon}{2} [M_p]_t + \frac{2}{\varepsilon} \log N.
$$
From \eqref{logS_int_abstract}, we deduce that for all $t \in [0,N]$ with $N \geq N_0$,
\begin{align}\label{logS_int_abstractsec} 
\begin{aligned}
\log S_p(t) &\leq \log (\|y_0\|_H^p + 1) -\frac{p^2}{2}\beta_1^2(1-\varepsilon)\int_0^t \frac{\|y(s)\|^{2p}_H}{S^2_p(s)} ds  + \frac{2}{\varepsilon} \log N\\
&\quad+ \int_0^t \frac{1}{S_p(s)}\bigg[-\eta e^{-\eta s} + \bigg( p \beta_0 + \frac{p(p-1)}{2}\beta_1^2 -p\lambda_1\bigg)\|y(s)\|_H^p\bigg] ds, \quad \text{a.s.} 
\end{aligned}
\end{align}
Let us introduce the auxiliary function $z(s) = e^{\eta s}\|y(s)\|_H^p$. Then, inequality \eqref{logS_int_abstractsec} can be expressed as follows
\begin{equation}\label{G_ineq_abstract} \log S_p(t) \leq \log (\|y_0\|_H^p + 1) + \int_0^t G(z(s)) ds + \frac{2}{\varepsilon} \log N, \quad \text{a.s.}, 
\end{equation}
where the function $G: \mathbb{R}_+ \to \mathbb{R}$ is defined by
\begin{equation*}
G(u) := \frac{-\eta + A u}{u+1} - \frac{B u^2}{(u+1)^2},
\end{equation*}
with
$$
A=p \beta_0 + \frac{p(p-1)}{2}\beta_1^2 -p\lambda_1
\quad \text{and} \quad
B=\frac{p^2}{2}\beta_1^2(1-\varepsilon).
$$
We now set $\eta = 2B - A$. Since $\varepsilon \in (0,1/2p)$, it follows that
$$
\eta = p^2\beta_1^2(1-\varepsilon)-\frac{p(p-1)}{2}\beta_1^2 - p\beta_0 + p\lambda_1
\geq \frac{p^2}{2}\beta_1^2 - p\beta_0 + p\lambda_1.
$$
Recalling the condition \eqref{alconst}, this ensures that the parameter $\eta > 0$. On the other hand, it is straightforward to verify that
$$
G'(u) = \frac{(A+\eta)(u+1)-2Bu}{(u+1)^3} = \frac{2B}{(u+1)^3} > 0, \quad \text{for all } u \in \mathbb{R}_+.
$$
Hence, $G$ is increasing, and therefore, for all $u \in \mathbb{R}_+$, we have
\begin{align}\label{ineqGu}
G(u)\leq \lim_{v \to \infty} G(v) = A-B.
\end{align} 
Combining \eqref{G_ineq_abstract} and \eqref{ineqGu}, we obtain that, for all $t \in [0,N]$ with $N \geq N_0$,
\begin{equation}\label{G_ineq_abstractsec} 
\log S_p(t) \leq \log \bigl(\|y_0\|_H^p + 1\bigr) + t(A - B) + \frac{2}{\varepsilon} \log N, \quad \text{a.s.} 
\end{equation}
Thus, for any $t \in [N-1, N]$, dividing \eqref{G_ineq_abstractsec} by $t$ and letting $t \to \infty$, we get
\begin{align*} 
\limsup_{t \to \infty} \frac{1}{t} \log S_p(t) 
&\leq A - B \\ 
&= p\beta_0 + \frac{p(p-1)}{2}\beta_1^2 - p\lambda_1 - \frac{p^2}{2}\beta_1^2(1 - \varepsilon), \quad \text{a.s.} 
\end{align*}
Letting $\varepsilon \to 0^+$, we finally obtain
\begin{equation*} 
\limsup_{t \to \infty} \frac{1}{t} \log S_p(t) \leq -\Big(\frac{p}{2} \beta_1^2+p(\lambda_1-\beta_0)\Big)=-\mu_{as}, \quad \text{a.s.}
\end{equation*} 
Then, for almost every $\omega \in \Omega$ and for every $\varepsilon > 0$, there exists $T(\omega) > 0$ such that
\[
\frac{1}{t} \log S_p(t,\omega) \leq -\mu_{as} + \varepsilon,
\quad \text{for all } t > T(\omega).
\]
Set $\varepsilon = \frac{\mu_{as}}{2}$. Then there exists $T(\omega) > 0$ such that
\[
\frac{1}{t} \log S_p(t,\omega) \leq -\frac{\mu_{as}}{2},
\quad \text{for all } t > T(\omega).
\]
Hence,
\[
S_p(t,\omega) \leq e^{-\frac{\mu_{as}}{2} t}, 
\quad \text{for all } t > T(\omega).
\]
Since $\|y(t,\omega)\|_H^p \leq S_p(t,\omega)$, it follows that
\[
\|y(t,\omega)\|_H^p \leq e^{-\frac{\mu_{as}}{2} t},
\quad \text{for all } t > T(\omega).
\]
Now define
\[
\xi(\omega) := \max\left\{1,\; \sup_{0 \leq t \leq T(\omega)} 
\left(e^{\frac{\mu_{as}}{2} t}\|y(t,\omega)\|_H^p\right) \right\}.
\]
Since the mapping $t \mapsto \|y(t,\omega)\|_H$ is continuous, it follows that $\xi(\omega) < \infty$. Therefore, for all $t \geq 0$, we obtain
\[
\|y(t,\omega)\|_H^p \leq \xi(\omega) e^{-\frac{\mu_{as}}{2} t}.
\]
This completes the proof of Theorem \ref{as_stability_abstract}.
\end{proof}

\begin{rmk}
We now present some remarks on the almost sure exponential stability result.
\begin{itemize}
\item Under condition \eqref{alconst}, Theorem \ref{as_stability_abstract} implies that almost all trajectories converge exponentially to zero. In particular, the solution of \eqref{uncontrolled_for_eq} is almost surely exponentially stable in the Lyapunov sense (see, e.g., \cite{Mao2008}), and its long-time growth rate satisfies
\[
\alpha_p(y_0):=\limsup_{t\to\infty}\frac{1}{t}\log \|y(t,\omega)\|_H^p \le -\frac{\mu_{as}}{2}, \quad \textnormal{a.s.}
\]
Moreover, \eqref{mu_as_def_abstract} shows that the decay of $\|y(t)\|_H^p$ is proportional to $p$, so larger values of $p$ yield faster decay.

\item The multiplicative noise term $\beta_1 y\, dW(t)$ has a dual effect: it may destabilize the system in the $p$-th moment sense, while improving the almost sure decay rate through the term $\beta_1^2$ in \eqref{mu_as_def_abstract}, thereby enhancing pathwise stability. In particular, even when the deterministic system is unstable (i.e., \(\lambda_1 \le \beta_0\)), sufficiently large noise intensity satisfying
$|\beta_1|>\sqrt{2(\beta_0-\lambda_1)}$ can induce almost sure exponential stability, illustrating the phenomenon of stochastic stabilization. This effect enlarges the admissible parameter region \((\beta_1,\beta_0)\) for which condition \eqref{alconst} holds, as shown by the green region in Figure~\ref{fig:as_stability}.
\begin{figure}[H]
\centering
\begin{tikzpicture}
\begin{axis}[
    width=13cm,
    height=8.5cm,
    xlabel={$\beta_1$},
    ylabel={$\beta_0$},
    xmin=-6, xmax=6,
    ymin=-2, ymax=12,
    axis lines=middle,
    grid=major,
    grid style={gray!15},
    xtick={-5,-4,-2,0,2,4,5},
    ytick={-2,0,2,4,6,8,10,12},
    samples=200,
]
\def\lambdaone{1}
\addplot[
    name path=curve,
    blue,
    very thick,
     dashed,
    smooth,
    domain=-5:5
]
{\lambdaone + 0.5*x^2};
\addplot[name path=bottom, draw=none] {-2};
\addplot[
    green!60,
    opacity=0.35
] fill between[of=curve and bottom];
\addplot[
    blue,
    very thick,
     dashed
]
{\lambdaone + 0.5*x^2};
\node[blue] at (axis cs:1.5,7.5)
{$\beta_0=\lambda_1+\frac{1}{2}\beta_1^2$};
\draw[blue,->,thick]
(axis cs:1.3,7.1) -- (axis cs:2.9,5.5);
\node at (axis cs:3.4,1.8)
{\textbf{{\large ASE Stability}}};
\end{axis}
\end{tikzpicture}
\caption{Almost sure exponential stability region, with $\lambda_1=1$.}
\label{fig:as_stability}
\end{figure}
\end{itemize}
\end{rmk}

\subsection{Illustrative Examples for Stochastic PDEs}

In this subsection, we illustrate the abstract results on $p$-th moment exponential stability (Theorem \ref{pth_stability_abstract}) and almost sure exponential stability (Theorem \ref{as_stability_abstract}) by applying them to some classical classes of stochastic partial differential equations, including the stochastic heat equation, the stochastic biharmonic heat equation, the fractional stochastic heat equation, and the stochastic degenerate parabolic equation, all associated with operators satisfying the standing assumptions. In each case, we verify that the corresponding operator fits the abstract framework and recall the explicit stability conditions, analyzing them in relation to the structure of each equation. For further details on the semigroup framework, spectral properties of the underlying operator, and the well-posedness of stochastic evolution equations, we refer the reader to \cite{da2014stochastic,Lue2021a,engel2000one,prevot2007concise}. Throughout the examples, we assume that $G \subset \mathbb{R}^m$ ($m \geq 1$) is a bounded domain with smooth boundary $\partial G$, and we set $H = L^2(G)$.
\subsubsection{Stochastic Heat Equation}
We consider the following system
\begin{equation}\label{SHEquation}
\begin{cases}
dy - \Delta y\,dt = \beta_0 y\,dt + \beta_1 y\,dW(t), & (t,x) \in (0,\infty)\times G,\\
y = 0, & (t,x) \in (0,\infty)\times \partial G,\\
y(0,\cdot) = y_0, & x \in G,
\end{cases}
\end{equation}
where $y_0 \in L^p_{\mathcal{F}_0}(\Omega; L^2(G))$, $\beta_0, \beta_1 \in \mathbb{R}$, and $p \ge 1$. Define the operator $A = -\Delta$ with domain $\mathcal{D}(A)=H^2(G)\cap H_0^1(G)$. Then $A$ is self-adjoint, positive-definite, and has compact resolvent, so that the abstract setting applies. Let $\{\lambda_k\}_{k=1}^\infty$ denote the eigenvalues of $A$, satisfying
\[
0 < \lambda_1 \le \lambda_2 \le \cdots \to \infty.
\]

Applying Theorem \ref{pth_stability_abstract}, the solution
$$
y \in L^p_{\mathcal{F}}(\Omega; C([0,\infty); L^2(G))) \cap L^p_{\mathcal{F}}(\Omega; L^2(0,\infty; H_0^1(G)))
$$
of \eqref{SHEquation} is $p$-th moment exponentially stable provided that
$(p-1)\beta_1^2 < 2(\lambda_1 - \beta_0)$,
while Theorem \ref{as_stability_abstract} ensures almost sure exponential stability under the condition $\beta_1^2 > 2(\beta_0 - \lambda_1)$. These conditions highlight the balance between the dissipative effect of the Laplacian (through $\lambda_1$), the deterministic growth rate $\beta_0$, and the intensity of the multiplicative noise $\beta_1$. We note that similar stability results for the stochastic heat equation have been established in the literature; see, e.g., \cite{HernandezSantamaria2026,xie2008moment,yang2022stochastic}.

\subsubsection{Stochastic Biharmonic Heat Equation}
We introduce the following system
\begin{equation}\label{BihSHEquation}
\begin{cases}
dy + \Delta^2 y\,dt = \beta_0 y\,dt + \beta_1 y\,dW(t), & (t,x) \in (0,\infty)\times G,\\
\text{BC}, & \\
y(0,\cdot) = y_0, & x \in G,
\end{cases}
\end{equation}
where $y_0 \in L^p_{\mathcal{F}_0}(\Omega; L^2(G))$, $p \ge 1$, and we consider one of the two classical types of boundary conditions (BC):

\begin{itemize}
\item First, under the hinged boundary conditions
\[
y = \Delta y = 0,\qquad (t,x)\in(0,\infty)\times \partial G,
\]
the biharmonic operator $A=\Delta^2$ can be defined as $A = (-\Delta)^2$, where $-\Delta$ denotes the Dirichlet Laplacian on $G$. Its domain is given by
\[
\mathcal{D}(A) = \{ y \in H^4(G) \mid y, \Delta y \in  H_0^1(G)\}.
\]
In this case, $A$ reduces spectrally to the square of the Dirichlet Laplacian, sharing the same eigenfunctions, while the eigenvalues satisfy $\mu_k = \lambda_k^2$, where $\{\lambda_k\}_{k\ge1}$ are the eigenvalues of the Dirichlet Laplacian. Moreover, the operator $A$ is self-adjoint, positive-definite, and has compact resolvent, with fractional domain $\mathcal{D}(A^{1/2}) = H^2(G)\cap H_0^1(G)$.

\item Second, under the clamped boundary conditions
\[
y = \partial_\nu y = 0,\qquad (t,x)\in(0,\infty)\times \partial G,
\]
the biharmonic operator is endowed with the domain $\mathcal{D}(A)= H^4(G)\cap H^2_0(G)$, and is no longer reducible to the Dirichlet Laplacian. Nevertheless, it remains self-adjoint, positive definite, and has compact resolvent, with $\mathcal{D}(A^{1/2}) = H^2_0(G)$.
\end{itemize}
In both cases, denoting by $\{\mu_k\}_{k=1}^\infty$ the eigenvalues of $A$, we have
\[
0 < \mu_1 \le \mu_2 \le \cdots \to \infty.
\]
Hence, as in the first example, we apply Theorems \ref{pth_stability_abstract} and \ref{as_stability_abstract} to derive stability results for equation \eqref{BihSHEquation}. In particular, the solution 
$$y \in L^p_{\mathcal F}(\Omega; C([0,\infty); L^2(G))) \cap L^p_{\mathcal F}(\Omega; L^2(0,\infty; \mathcal{D}(A^{1/2})))$$
of \eqref{BihSHEquation} is $p$-th moment exponentially stable whenever $(p-1)\beta_1^2 < 2(\mu_1 - \beta_0)$, while Theorem \ref{as_stability_abstract} guarantees almost sure exponential stability under the condition $\beta_1^2 > 2(\beta_0 - \mu_1)$. While the fourth-order operator defined with clamped boundary conditions possesses a distinct spectral structure, its counterpart under hinged conditions exhibits a direct spectral relationship with the Laplacian. This specific hinged case provides stronger coercivity than standard second-order diffusion, yielding enhanced intrinsic damping and superior stability margins under multiplicative noise.


\subsubsection{Fractional Stochastic Heat Equation}
We consider the system
\begin{equation}\label{FracSHEquation}
\begin{cases}
dy + (-\Delta)^{s} y\,dt = \beta_0 y\,dt + \beta_1 y\,dW(t), & (t,x) \in (0,\infty)\times G,\\
y = 0, & (t,x) \in (0,\infty)\times \partial G,\\
y(0,\cdot) = y_0, & x \in G,
\end{cases}
\end{equation}
where $y_0 \in L^p_{\mathcal{F}_0}(\Omega; L^2(G))$, $\beta_0, \beta_1 \in \mathbb{R}$, $p \ge 1$, and $s \in (0,1]$. We define the operator $A = (-\Delta)^s$ as the spectral fractional power of the Dirichlet Laplacian $-\Delta$. More precisely, let $\{(\lambda_k,\phi_k)\}_{k\ge1}$ denote the eigenpairs of $-\Delta$ with homogeneous Dirichlet boundary conditions. Then $A$ is defined via the spectral decomposition
\[
A\xi = \sum_{k=1}^\infty \lambda_k^s \langle \xi,\phi_k\rangle_H \,\phi_k,
\]
with domain
\[
\mathcal{D}(A) = \left\{ \xi \in L^2(G) \,:\, \sum_{k=1}^\infty \lambda_k^{2s} |\langle \xi,\phi_k\rangle_H|^2 < \infty \right\}.
\]
It follows that $A$ is self-adjoint, positive-definite, and has compact resolvent. Moreover, the eigenfunctions remain unchanged while the eigenvalues become $\lambda_k^s$, that is,
\[
A\phi_k = \lambda_k^s \phi_k, \qquad 0 < \lambda_1^s \le \lambda_2^s \le \cdots,\quad \lambda_k^s \to \infty.
\]

Therefore, equation \eqref{FracSHEquation} fits into our abstract framework. Hence, by Theorem~\ref{pth_stability_abstract}, the solution 
$$y \in L^p_{\mathcal{F}}(\Omega; C([0,\infty); L^2(G)))
\cap L^p_{\mathcal{F}}(\Omega; L^2(0,\infty; D((-\Delta)^{s/2})))$$
of \eqref{FracSHEquation} is $p$-th moment exponentially stable if $
(p-1)\beta_1^2 < 2(\lambda_1^s - \beta_0)$, and by Theorem~\ref{as_stability_abstract}, the solution is almost surely exponentially stable provided that $\beta_1^2 > 2(\beta_0 - \lambda_1^s)$. This example shows that the order $s$ of the fractional Laplacian directly affects the dissipation strength: since the eigenvalues of $(-\Delta)^s$ are $\lambda_k^s$, the operator modifies the spectrum of the Dirichlet Laplacian via the transformation $\lambda_k \mapsto \lambda_k^s$. In particular, when $\lambda_1>1$ (as is the case in typical normalized or sufficiently small domains), decreasing $s$ reduces $\lambda_1^s$, thereby reducing the effective dissipation and leading to more restrictive stability conditions. 

\subsubsection{Stochastic Degenerate Parabolic Equation}
We consider the degenerate equation
\begin{equation}\label{DegenerateSPE}
\begin{cases}
dy - (a(x) y_x)_x\,dt = \beta_0 y\,dt + \beta_1 y\,dW(t), & (t,x) \in (0,\infty)\times (0,1),\\
\begin{cases}
    y(t,0)=0 & \text{ if } K_a \in [0,1),\\
    \displaystyle\lim_{x\to 0^+} a(x) y_x(t,x) =0 & \text{ if } K_a \in [1,2),
\end{cases}  &t \in (0,\infty),\\
y(t,1)=0, & t \in (0,\infty),\\
y(0,\cdot) = y_0, & x \in (0,1),
\end{cases}
\end{equation}
where $y_0 \in L^p_{\mathcal{F}_0}(\Omega; L^2(0,1))$, $\beta_0, \beta_1 \in \mathbb{R}$, and $p \ge 1$. We assume that the diffusion coefficient $a$ satisfies
\[
a \in C([0,1]) \cap C^1((0,1]), \qquad a(x)>0 \ \text{on } (0,1], \qquad a(0)=0,
\]
so that the equation is degenerate at $x=0$ with a degree of degeneracy measured by the parameter $K_a$ defined by
\begin{equation}
    K_a := \sup_{0 < x \leq 1} \frac{x |a'(x)|}{a(x)}.
\end{equation}
Following the standard degenerate framework, we introduce the weighted Sobolev space
\begin{align*}
H_a^1(0,1)
=
\Big\{
u \in L^2(0,1)\;:&\; u\text{ is locally absolutely continuous in } (0,1],\, 
\sqrt{a}\, u_x \in L^2(0,1)
\Big\},
\end{align*}
endowed with the norm $\|u\|_{H_a^1(0,1)}^2 = \|u\|_{L^2(0,1)}^2 + \|\sqrt{a}\, u_x\|_{L^2(0,1)}^2$. We also define the higher-order weighted space
\[
H_a^2(0,1)
=
\left\{
u \in H_a^1(0,1)\;:\; a u_x \in H^1(0,1)
\right\},
\]
endowed with the norm $\|u\|_{H_a^2(0,1)}^2
=
\|u\|_{H_a^1(0,1)}^2 + \|(a u_x)_x\|_{L^2(0,1)}^2$. Define the operator $A$ on $H=L^2(0,1)$ by $Av = -(a v_x)_x, \quad \forall v \in \mathcal{D}(A)$ where 
$$\mathcal{D}(A)=\Bigl\{
v\in H_a^2(0,1): v(1)=0,\ 
v(0)=0 \text{ if } K_a\in[0,1)
\Bigr\}.$$
Then $A$ is a densely defined, self-adjoint, positive-definite operator on $L^2(0,1)$ with compact resolvent, so that the abstract framework applies. Let $\{\lambda_k\}_{k=1}^\infty$ denote the eigenvalues of $A$, satisfying
\[
0 < \lambda_1 \le \lambda_2 \le \cdots \to \infty.
\]

Applying Theorem \ref{pth_stability_abstract}, the solution
$$
y \in L^p_{\mathcal{F}}(\Omega; C([0,\infty); L^2(0,1))) \cap L^p_{\mathcal{F}}(\Omega; L^2(0,\infty; H_a^1(0,1)))
$$
of \eqref{DegenerateSPE} is $p$-th moment exponentially stable if $(p-1)\beta_1^2 < 2(\lambda_1 - \beta_0)$, while Theorem \ref{as_stability_abstract} ensures almost sure exponential stability under the condition $\beta_1^2 > 2(\beta_0 - \lambda_1)$. These conditions reflect the interplay between the diffusion coefficient $a(x)$, the spectral gap $\lambda_1$, the drift $\beta_0$, and the noise intensity $\beta_1$. The degeneracy at $x=0$ is encoded in the weighted space $H_a^1(0,1)$ and influences the operator through its spectrum. In the specific case of power degeneracy $a(x)=x^\alpha$ for $0 \leq \alpha < 2$, the first eigenvalue $\lambda_1$ is characterized by the Rayleigh quotient (see \cite[page 187]{cannarsa2016global}):
\[
\lambda_1 = \inf_{v \in H_a^1(0,1)\setminus\{0\}} 
\frac{\int_0^1 x^\alpha|v_x|^2dx}{\int_0^1 |v|^2dx}.
\]
It follows that a stronger degeneracy (larger $\alpha$) reduces $\lambda_1$, thereby weakening the effective dissipation $\lambda_1 - \beta_0$, which in turn imposes more restrictive stability criteria.


\section{Spectral Decomposition and Abstract Approximation}\label{sec3}
	   As in \cite{Grecksch1996,Kloeden2001}, to facilitate the analysis of \eqref{uncontrolled_for_eq} in an infinite-dimensional vector form, we project the state $y(t)$ onto the orthonormal basis $\{\phi_k\}_{k=1}^{\infty}$ of $H$ consisting of eigenfunctions of $A$. Specifically, we write
		\begin{equation*}
			y(t) = \sum_{k=1}^{\infty} y_k(t) \phi_k,
		\end{equation*}
		where $y_k(t) = \langle y(t), \phi_k \rangle_H$ are the Fourier coefficients.
		
		By taking the inner product of \eqref{uncontrolled_for_eq} with $\phi_k$, we obtain
		\begin{equation*}
        \begin{cases}
			\langle dy(t), \phi_k \rangle_H + \langle Ay(t), \phi_k \rangle_H dt = \beta_0 \langle y(t), \phi_k \rangle_H dt + \beta_1 \langle y(t), \phi_k \rangle_H dW(t),\\
            \langle y(0),\phi_k\rangle_H=\langle y_0,\phi_k\rangle_H=y_0^k, \qquad k = 1, 2, \dots
              \end{cases}
		\end{equation*}
		Using the orthonormality $\langle \phi_i, \phi_k \rangle_H = \delta_{ik}$ and the self-adjointness of $A$, the equation reduces to
		\begin{equation*}
        \begin{cases}
			dy_k(t) + \lambda_k y_k(t) dt = \beta_0 y_k(t) dt + \beta_1 y_k(t) dW(t),\\
            y_k(0)=y_0^k, \qquad k = 1, 2, \dots
              \end{cases}
		\end{equation*}
		This can be written as the following infinite-dimensional system of stochastic ordinary differential equations
		\begin{equation*}
        \begin{cases}
			dy_k(t) = -(\lambda_k - \beta_0) y_k(t) dt + \beta_1 y_k(t) dW(t),\\
            y_k(0)=y_0^k, \qquad k = 1, 2, \dots
              \end{cases}
		\end{equation*}
		
If we denote the coordinate vectors, the spectral matrix, and the identity matrix by
\[
\mathbf{y}(t) = (y_1(t), y_2(t), \dots)^{\top}, \qquad
\mathbf{y}_0 = (y_0^1, y_0^2, \dots)^{\top},
\]
\[
\Lambda = \operatorname{diag}(\lambda_1, \lambda_2, \dots),\qquad I = \operatorname{diag}(1,1,\dots),
\]
then we can write the abstract evolution equation \eqref{uncontrolled_for_eq} in the following compact infinite-dimensional vector form
\begin{equation}\label{eqq39}
\begin{cases}
d\mathbf{y}(t) = -(\Lambda - \beta_0 I) \mathbf{y}(t) dt + \beta_1 \mathbf{y}(t) dW(t),\\
\textbf{y}(0)=\textbf{y}_0.
\end{cases}
\end{equation}
		
\subsection{Spatial Discretization: Spectral Galerkin Approximation}
For the numerical simulation of the stability regimes defined in Theorems \ref{pth_stability_abstract} and \ref{as_stability_abstract}, we proceed as in \cite{yang2025long} to approximate the solution $y$ of \eqref{uncontrolled_for_eq} by its finite dimensional spectral approximation. Let $H_N = \text{span}\{\phi_1, \dots, \phi_N\}$ be the subspace spanned by the first $N$ orthonormal eigenfunctions of $A$. We define the orthogonal projection $P_N: H \to H_N$, so that $P_N y(t) = \sum_{k=1}^N y_k(t) \phi_k$. It is well-established that $P_N y$ approximates $y$ in the $L^p$-norms as $N \to \infty$ (see, e.g., \cite{Canuto2007}).
		
By truncating \eqref{eqq39}, we define the $N$-dimensional coordinate vectors in $\mathbb{R}^N$ by
\[
Y_N(t) := (y_1(t), y_2(t),\dots, y_N(t))^{\top}, 
\qquad 
Y_0^N := (y_0^1, y_0^2, \dots, y_0^N)^{\top}.
\]
Then, the dynamics of this $N$-dimensional spectral Galerkin approximation are governed by the following system of stochastic differential equations
\begin{equation}\label{truncated_system}
\begin{cases}
dY_N(t) = -(\Lambda_N - \beta_0 I_N) Y_N(t) dt + \beta_1 Y_N(t) dW(t),\\
Y_N(0)=Y_0^N,
\end{cases}
\end{equation}
where $\Lambda_N = \operatorname{diag}(\lambda_1, \dots, \lambda_N)$ represents the restriction of the spectral matrix $\Lambda$ to $H_N$, and $I_N = \operatorname{diag}(1, \dots, 1)$ is the identity matrix. 

		
\subsection{Full Discretization}
For the full discretization, we introduce a uniform temporal mesh with step size $\tau > 0$. We define the discrete time points $t_n = n\tau$, for $n \in \mathbb{N}$. Let $Y_n$ denote the discrete approximation of $Y_N(t_n)$. Applying the \textit{implicit} Euler-Maruyama method (see, e.g., \cite[Chapter 8]{Lord2014} or \cite{higham2002strong}) to equation \eqref{truncated_system} leads to the following discrete implicit recursive relation
\begin{equation}\label{discrete_scheme}
\begin{cases}
Y_{n+1} = Y_n - \tau(\Lambda_N - \beta_0 I_N) Y_{n+1} + \beta_1 Y_n \Delta W_n,\\
Y_0=Y_0^N,
\end{cases}
\end{equation}
where $\Delta W_n = W(t_{n+1}) - W(t_n) \sim \mathcal{N}(0, \tau)$ represents independent Brownian increments. To derive the explicit rule for numerical implementation, we isolate the state at the future time step $Y_{n+1}$ and obtain
\begin{equation*}
\left[I_N + \tau(\Lambda_N - \beta_0 I_N)\right]Y_{n+1} = Y_n + \beta_1 Y_n \Delta W_n.
\end{equation*}
In what follows, we work under the conditions \eqref{pthmoexcon} and \eqref{alconst}, which guarantee respectively the $p$-th moment exponential stability and almost sure exponential stability. Consequently, the matrix $M = I_N + \tau(\Lambda_N - \beta_0 I_N)$ is invertible, and we obtain the following discrete explicit recursive relation
\begin{equation}\label{final_numerical_eq}
\begin{cases}
Y_{n+1} = M^{-1} \left( Y_n + \beta_1 Y_n \Delta W_n \right),\\
Y_0=Y_0^N.
\end{cases}
\end{equation}
Hence, we conclude that
\begin{equation}\label{scemeykn}
\begin{cases}
Y_{n+1}^k = \frac{1}{1+\tau(\lambda_k-\beta_0)}
\left( Y_n^k + \beta_1 Y_n^k \Delta W_n \right),  \\
Y_0^k=y_0^k,\qquad k=1,2,\dots,N,
\end{cases}
\end{equation}
where $Y_n^k$ denotes the $k$-th component of the vector $Y_n$. For any $p \ge 1$, the norm of $Y_n$ is given by
\[
\|Y_n\|^p = \left(\sum_{k=1}^N |Y_n^k|^2\right)^{p/2}.
\]

\subsection{Numerical Stability}
In what follows, we show that, under assumptions \eqref{pthmoexcon} and \eqref{alconst}, the fully discrete scheme \eqref{final_numerical_eq} preserves both the $p$-th moment exponential stability and the almost sure exponential stability of \eqref{uncontrolled_for_eq}. To this end, we extend and adapt some ideas from \cite[Section 4]{Higham2007}, originally developed for a class of linear scalar stochastic differential equations. We first establish the \(p\)-th moment exponential stability.
\begin{prop}\label{propso6des}
Assume that condition \eqref{pthmoexcon} holds. Then there exists a small $\tau_0 > 0$ such that for any fixed $\tau \in (0,\tau_0)$, the discrete scheme \eqref{final_numerical_eq} is $p$-th moment exponentially stable. More precisely, there exists a constant \(C>0\), depending only on \(N\) and \(p\) and independent of \(n\), such that
\[
\mathbb{E}(\|Y_n\|^p) \le C e^{-\frac{\mu_p}{2} t_n}\mathbb{E}(\|Y_0\|^p),\quad n\in\mathbb{N},
\]
where $\mu_p$ is given by \eqref{mu_def_abstract}. In particular, the numerical solution $Y_n$ converges to zero exponentially as \(n\to\infty\), in the $p$-th moment sense.
\end{prop}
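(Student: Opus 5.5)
The plan is to work directly with the componentwise recursion \eqref{scemeykn}. It shows that the scheme multiplies the vector $Y_n$ by the scalar random factor $1+\beta_1\Delta W_n$ and by the deterministic diagonal matrix $M^{-1}=\operatorname{diag}(d_1,\dots,d_N)$, where
\[
d_k=\frac{1}{1+\tau(\lambda_k-\beta_0)} .
\]
Condition \eqref{pthmoexcon} forces $\lambda_1>\beta_0$. Hence $\lambda_k-\beta_0\ge\lambda_1-\beta_0>0$ for all $k$, and therefore $0<d_k\le d_1<1$ for every $\tau>0$. This gives the pathwise bound
\[
\|Y_{n+1}\|\le d_1\,|1+\beta_1\Delta W_n|\,\|Y_n\| .
\]
Since $Y_n$ is measurable with respect to $\sigma(Y_0,\Delta W_0,\dots,\Delta W_{n-1})$ and $\Delta W_n$ is independent of it, taking $p$-th powers and expectations gives
\[
\mathbb{E}\|Y_{n+1}\|^p\le d_1^p\, m_p(\tau)\,\mathbb{E}\|Y_n\|^p,\qquad m_p(\tau):=\mathbb{E}\big|1+\beta_1\sqrt{\tau}\,Z\big|^p,\quad Z\sim\mathcal N(0,1).
\]
Iterating yields $\mathbb{E}\|Y_n\|^p\le\big(d_1^p m_p(\tau)\big)^n\,\mathbb{E}\|Y_0\|^p$.

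It then remains to prove that $d_1^p m_p(\tau)\le e^{-\mu_p\tau/2}$ for all sufficiently small $\tau$. Once this holds, $\big(d_1^pm_p(\tau)\big)^n\le e^{-\frac{\mu_p}{2}t_n}$, and the claim follows with $C=1$, which in particular is independent of $n$ (and even of $N$). The deterministic factor expands as
\[
d_1^p=1-p(\lambda_1-\beta_0)\tau+O(\tau^2).
\]
For the random factor I will show
\[
m_p(\tau)=1+\tfrac{p(p-1)}{2}\beta_1^2\tau+o(\tau).
\]
Multiplying the two expansions gives $d_1^pm_p(\tau)=1-\mu_p\tau+o(\tau)$. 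Since $\mu_p>0$ and $e^{-\mu_p\tau/2}=1-\frac{\mu_p}{2}\tau+O(\tau^2)$, the gap of $\frac{\mu_p}{2}\tau$ absorbs the $o(\tau)$ error for $\tau<\tau_0$. This is exactly where the choice of $\tau_0$ comes from.

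The main obstacle is the expansion of $m_p(\tau)$ for non-integer $p\ge1$, where $|1+x|^p$ is not a polynomial. I will split the expectation over the events $\{|\beta_1\sqrt\tau Z|\le 1/2\}$ and its complement.

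On the first event, Taylor's theorem with remainder applies to the smooth function $x\mapsto(1+x)^p$ on $[-1/2,1/2]$:
\[
(1+x)^p=1+px+\tfrac{p(p-1)}{2}x^2+R(x),\qquad |R(x)|\le c_p|x|^3 .
\]
Taking expectations of the three main terms reproduces $1+\frac{p(p-1)}{2}\beta_1^2\tau$, up to corrections that come from removing the tail region. Those corrections are bounded by $\mathbb{E}\big[(1+|x|+x^2)\mathbf 1_{|x|>1/2}\big]$ with $x=\beta_1\sqrt\tau Z$, which is $O(e^{-c/\tau})$ by the Gaussian tail. The remainder contributes $O(\tau^{3/2})$ through $\mathbb{E}|Z|^3$.

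On the complement, I bound $|1+x|^p\le 2^{p}(1+|x|^p)$ and use Cauchy--Schwarz together with $\mathbb{P}\big(|Z|>1/(2|\beta_1|\sqrt\tau)\big)\le 2e^{-1/(8\beta_1^2\tau)}$. This contribution is again $O(e^{-c/\tau})=o(\tau)$.

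Combining both regions gives $m_p(\tau)=1+\frac{p(p-1)}{2}\beta_1^2\tau+O(\tau^{3/2})$, which completes the argument. The case $\beta_1=0$ is trivial, since then $m_p(\tau)=1$.
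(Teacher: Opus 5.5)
Your proof is correct. It differs from the paper's mainly in how the vector norm is handled, and the difference works in your favour.

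The paper argues coordinatewise. From $Y_n^k=Y_0^k\prod_{j<n}a_k(\tau)(1+\beta_1\Delta W_j)$ it gets the exact identity $\mathbb{E}|Y_n^k|^p=\mathbb{E}|Y_0^k|^p\,[a_k(\tau)^p\,\mathbb{E}|1+\beta_1\Delta W|^p]^n$. It bounds the bracket by $1-\frac{\mu_p}{2}\tau$ using $\lambda_k\ge\lambda_1$. It then reassembles $\|Y_n\|^p$ via the equivalence of $\|Y\|^p$ and $\sum_k|Y^k|^p$ on $\mathbb{R}^N$. That last step is the sole source of its constant $C=C(N,p)$, which is of order $N^{|p/2-1|}$ and so deteriorates as $N\to\infty$ unless $p=2$.

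You instead use that the noise is scalar, so $Y_{n+1}=(1+\beta_1\Delta W_n)M^{-1}Y_n$ with $\|M^{-1}\|=d_1<1$. This gives a one-step inequality for the norm itself. The result is $C=1$ and a $\tau_0$ depending only on $\lambda_1,\beta_0,\beta_1,p$, i.e.\ a bound uniform in the Galerkin dimension. The cost is an inequality where the paper has an exact per-mode identity, which does not matter for stability.

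Your truncation argument for $m_p(\tau)=1+\frac{p(p-1)}{2}\beta_1^2\tau+O(\tau^{3/2})$ also fills in what the paper leaves as a bare ``Taylor expansion at $0$''. Unless $p$ is an even integer, $|1+x|^p$ is not a polynomial and is not smooth at $x=-1$, where the Gaussian has positive density. Your Gaussian tail estimate is exactly what makes that step rigorous.
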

\begin{proof}
Let $k=1,2,\dots,N$ and $n\in\mathbb{N}$. From \eqref{scemeykn}, we have that
\[
Y_{n+1}^k
=
a_k(\tau)(1+\beta_1 \Delta W_n)Y_n^k,
\qquad
a_k(\tau)=\frac{1}{1+\tau(\lambda_k-\beta_0)},
\]
which implies that
\begin{align}\label{equabe43firt}
Y_n^k
=
Y_0^k \prod_{j=0}^{n-1} a_k(\tau)(1+\beta_1 \Delta W_j).
\end{align}
Since \(Y_0^k\) is \(\mathcal{F}_0\)-measurable, and using the independence of the increments of \(W(\cdot)\), taking expectations on both sides of \eqref{equabe43firt}, we obtain
\begin{equation}\label{eqqforyk0}
\mathbb{E}(|Y_n^k|^p)
=
\mathbb{E}(|Y_0^k|^p)
\Big[|a_k(\tau)|^p \, \mathbb{E}(|1+\beta_1 \Delta W|^p)\Big]^n,
\end{equation}
where \(\Delta W \sim \mathcal{N}(0,\tau)\). Using the scaling property of the normal distribution, we can write
\[
\Delta W = \sqrt{\tau}\,\xi,
\quad \text{where } \xi \sim \mathcal{N}(0,1).
\]
Hence, by applying the Taylor expansion at $0$, we obtain
\[
\mathbb{E}(|1+\beta_1 \Delta W|^p)=
\mathbb{E}\bigl(|1+\beta_1 \sqrt{\tau}\,\xi|^p\bigr)
=
1 + \frac{p(p-1)}{2}\beta_1^2\tau + o(\tau).
\]
Similarly, we have that
\[
|a_k(\tau)|^p
=
1 - p(\lambda_k-\beta_0)\tau + o(\tau).
\]
Hence, we derive
\[
|a_k(\tau)|^p \mathbb{E}(|1+\beta_1 \Delta W|^p)
=
1 -
\Big(p(\lambda_k-\beta_0)
-
\frac{p(p-1)}{2}\beta_1^2\Big) \tau + o(\tau).
\]
Recalling that $\lambda_k \ge \lambda_1$, we obtain 
\[
|a_k(\tau)|^p \mathbb{E}(|1+\beta_1 \Delta W|^p)
\le
1 - \mu_p \tau + o(\tau).
\]
This implies that
\begin{align}\label{ineqq1num}
|a_k(\tau)|^p \mathbb{E}(|1+\beta_1 \Delta W|^p)
\le
1 - \mu_p^\tau \tau,
\qquad \mu_p^\tau:= \mu_p + o(1).
\end{align}
Since $\mu_p^\tau \to \mu_p > 0$ as $\tau \to 0$, there exists a small $\tau_0 > 0$ such that for all $\tau \in (0,\tau_0)$,
\begin{align}\label{ineqq2num}
\mu_p^\tau \ge \frac{\mu_p}{2}.
\end{align}
From \eqref{ineqq1num} and \eqref{ineqq2num}, we deduce that
\begin{align}\label{ineqq3num}
|a_k(\tau)|^p \mathbb{E}(|1+\beta_1 \Delta W|^p)
\le
1 - \frac{\mu_p}{2} \tau.
\end{align}
Combining \eqref{eqqforyk0} and \eqref{ineqq3num}, and using the inequality $1+\alpha \leq e^{\alpha}$, we obtain that
\begin{align}\label{ineqqwithk}
\mathbb{E}(|Y_n^k|^p)
\le
e^{-\frac{\mu_p}{2} t_n}\mathbb{E}(|Y_0^k|^p),\qquad k=1,2,\dots,N.
\end{align}
On the other hand, by equivalence of norms, there exist constants $C_1,C_2>0$ such that
\begin{align}\label{ineqq4num}
C_1\sum_{k=1}^N |Y_n^k|^p\le \|Y_n\|^p \le C_2 \sum_{k=1}^N |Y_n^k|^p.
\end{align}
Summing the inequalities \eqref{ineqqwithk} over \(k = 1,2,\ldots,N\), and using \eqref{ineqq4num}, we conclude that
\[
\mathbb{E}(\|Y_n\|^p)
\le
C e^{-\frac{\mu_p}{2} t_n}\mathbb{E}(\|Y_0\|^p),
\]
where \(C\) depends only on \(N\) and \(p\). This completes the proof of Proposition~\ref{propso6des}.
\end{proof}

We next prove that the fully discrete scheme \eqref{final_numerical_eq} is almost surely exponentially stable.
\begin{prop}\label{proposs7eases}
Assume that condition \eqref{alconst} holds. Then there exists a small $\tau_0 > 0$ such that for any fixed $\tau \in (0,\tau_0)$, the discrete scheme \eqref{final_numerical_eq} is almost surely exponentially stable. More precisely, there exist a finite random variable $\xi(\omega)$ such that
\[
\|Y_n(\omega)\|^p \le \xi(\omega)\, e^{-\frac{\mu_{as}}{4} t_n},
\quad n\in\mathbb{N},\quad \textnormal{a.s.},
\]
where $\mu_{as}$ is given by \eqref{mu_as_def_abstract}. In particular, the numerical solution $Y_n$ converges to zero exponentially as \(n\to\infty\), almost surely.
\end{prop}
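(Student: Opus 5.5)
The plan is to exploit, exactly as in the proof of Proposition~\ref{propso6des}, the diagonal structure \eqref{scemeykn}. This gives the product representation \eqref{equabe43firt}
\[
Y_n^k=Y_0^k\prod_{j=0}^{n-1}a_k(\tau)\bigl(1+\beta_1\sqrt{\tau}\,\xi_j\bigr),\qquad \xi_j:=\Delta W_j/\sqrt{\tau}\ \text{ i.i.d. }\mathcal N(0,1).
\]
Instead of taking expectations, I will take logarithms and apply the strong law of large numbers.

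First I fix $\tau_0$ so small that $1+\tau(\lambda_1-\beta_0)>0$ for $\tau<\tau_0$. Then $0<a_k(\tau)\le a_1(\tau)$ for all $k$, since $\lambda_k\ge\lambda_1$. Because $\mathbb P(1+\beta_1\sqrt\tau\xi_j=0)=0$, almost surely every factor is nonzero. On $\{Y_0^k\neq 0\}$ we then get
\[
\frac1{t_n}\log|Y_n^k|=\frac{\log|Y_0^k|}{n\tau}+\frac{\log a_k(\tau)}{\tau}+\frac1\tau\cdot\frac1n\sum_{j=0}^{n-1}\log\bigl|1+\beta_1\sqrt\tau\,\xi_j\bigr| .
\]
Components with $Y_0^k=0$ vanish identically and can be ignored. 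The random variables $\log|1+\beta_1\sqrt\tau\xi_j|$ are i.i.d. and integrable, because the logarithmic singularity at $\xi=-1/(\beta_1\sqrt\tau)$ is integrable against the Gaussian density and the growth at infinity is logarithmic. Hence the strong law gives almost surely
\[
\limsup_{n\to\infty}\frac1{t_n}\log|Y_n^k|\le \frac{\log a_1(\tau)}{\tau}+\frac{1}{\tau}\,\mathbb E\log\bigl|1+\beta_1\sqrt\tau\,\xi\bigr| .
\]

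Next I expand both terms in $\tau$. The first is $\tau^{-1}\log a_1(\tau)=-(\lambda_1-\beta_0)+O(\tau)$. For the second I aim to show
\[
\mathbb E\log|1+\beta_1\sqrt\tau\,\xi|=-\tfrac12\beta_1^2\tau+o(\tau).
\]
Together these give
\[
\limsup_{n}\frac1{t_n}\log\|Y_n\|^p\le p\max_k\limsup_n\frac1{t_n}\log|Y_n^k|\le -\Bigl(p(\lambda_1-\beta_0)+\tfrac p2\beta_1^2\Bigr)+o(1)=-\mu_{as}+o(1),
\]
using that the maximum over the finitely many $k\le N$ is harmless. By condition \eqref{alconst}, $\mu_{as}>0$, so after shrinking $\tau_0$ the right-hand side is at most $-\mu_{as}/2$ for all $\tau<\tau_0$.

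Finally I convert the Lyapunov-exponent bound into the stated estimate, as at the end of Theorem~\ref{as_stability_abstract}. For a.e.\ $\omega$ there is $n_0(\omega)$ with $\|Y_n\|^p\le e^{-\frac{\mu_{as}}4t_n}$ for $n\ge n_0$; I keep slack here, using $-\mu_{as}/2<-\mu_{as}/4$. Then I set
\[
\xi(\omega):=\max\Bigl\{1,\ \max_{n<n_0(\omega)}e^{\frac{\mu_{as}}4t_n}\|Y_n(\omega)\|^p\Bigr\},
\]
which is finite because it is a maximum over finitely many terms.

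The main obstacle is the rigorous expansion of $\mathbb E\log|1+\varepsilon\xi|$ with $\varepsilon=\beta_1\sqrt\tau$, because $\log$ is singular and Taylor's formula is not valid globally. I would split the expectation over $\{|\varepsilon\xi|\le\varepsilon^{1/2}\}$ and its complement.
\begin{itemize}
\item On the first set, I use $\log(1+x)=x-\tfrac{x^2}2+O(|x|^3)$. Since $\mathbb E\xi=0$ and $\mathbb E\xi^2=1$, this yields $-\tfrac12\varepsilon^2+O(\varepsilon^3)$ up to Gaussian tail corrections.
\item On the complement, $|\xi|\ge\varepsilon^{-1/2}$, which has probability $O(e^{-1/(2\varepsilon)})$. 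I bound $\mathbb E\bigl[|\log|1+\varepsilon\xi||\,\mathbf 1_{\{|\xi|\ge\varepsilon^{-1/2}\}}\bigr]$ by Cauchy--Schwarz, using $\sup_{\varepsilon\le1}\mathbb E\bigl[\log^2|1+\varepsilon\xi|\bigr]<\infty$. This uniform bound follows because the density of $\varepsilon\xi$ near the point $-1$ is bounded by $O(\varepsilon^{-1})e^{-1/(2\varepsilon^2)}$.
\end{itemize}
Both error terms on the complement are exponentially small, hence $o(\varepsilon^2)=o(\tau)$, which is what is needed.
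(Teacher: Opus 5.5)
Your proposal is correct and follows essentially the same route as the paper. The common skeleton is: the product representation from \eqref{scemeykn}, logarithms, the strong law of large numbers for the i.i.d.\ terms $\log|1+\beta_1\Delta W_j|$, the expansions $\tau^{-1}\log a_1(\tau)=-(\lambda_1-\beta_0)+O(\tau)$ and $\mathbb E\log|1+\beta_1\sqrt\tau\,\xi|=-\tfrac12\beta_1^2\tau+o(\tau)$, and a finite-max construction of $\xi(\omega)$. The differences are cosmetic (you bound a $\limsup$ for the full norm rather than using componentwise thresholds $n_0^k$), and your extra details usefully make rigorous several steps the paper only asserts: choosing $\tau_0$ so that $1+\tau(\lambda_1-\beta_0)>0$ and hence $0<a_k(\tau)\le a_1(\tau)$, checking integrability for the strong law of large numbers, and the splitting argument that justifies the logarithmic Taylor expansion.
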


\begin{proof}
In this proof, we adopt the same notation as in the proof of Proposition~\ref{propso6des}. Let \(k = 1,2,\dots,N\) and \(n\in\mathbb{N}\). We first observe that
\begin{align}\label{equabe43}
|Y_n^k|^p
= |Y_0^k|^p
\prod_{j=0}^{n-1}
|a_k(\tau)|^p |1+\beta_1 \Delta W_j|^p.
\end{align}
By taking logarithms on both sides of \eqref{equabe43}, we get
\begin{align}\label{ineqqnumm12sec}
\log |Y_n^k|^p
=
\log |Y_0^k|^p + \sum_{j=0}^{n-1} Z_j^k,
\end{align}
where
\begin{align}\label{eqquideinenu1}
Z_j^k
:= \log\Big(|a_k(\tau)|^p |1+\beta_1 \Delta W_j|^p\Big),\qquad \Delta W_j \sim \mathcal{N}(0,\tau),\quad j=0,1,\dots,n-1.
\end{align}
By Taylor expansion at $0$ and the representation $\Delta W_j=\sqrt{\tau}\,\xi$,\; $\xi\sim\mathcal{N}(0,1)$, we obtain 
\begin{align}\label{eqquideinenu2}
\mathbb{E}\big(\log|1+\beta_1 \Delta W_j|\big)
=
\mathbb{E}\big(\log|1+\beta_1 \sqrt{\tau}\,\xi|\big)
=
-\frac{1}{2}\beta_1^2 \tau + o(\tau).
\end{align}
Similarly, we have
\begin{align}\label{eqquideinenu3}
\log |a_k(\tau)|^p
=
-p(\lambda_k-\beta_0)\tau + o(\tau).
\end{align}
Combining \eqref{eqquideinenu1}, \eqref{eqquideinenu2} and \eqref{eqquideinenu3}, we derive that
\begin{align*}
\mathbb{E}(Z_j^k)
&=
-\left[
p(\lambda_k-\beta_0)
+ \frac{p}{2}\beta_1^2
\right]\tau + o(\tau)\\
&\leq -\left[
p(\lambda_1-\beta_0)
+ \frac{p}{2}\beta_1^2
\right]\tau + o(\tau)\\
&= -\mu_{as}^\tau\tau,
\end{align*}
where $\mu_{as}^\tau = \mu_{as} + o(1)$. Since $\mu_{as}^\tau  \to \mu_{as}>0, \quad \text{as } \tau \to 0$, then there exists a small $\tau_0 > 0$ such that for all $\tau \in (0,\tau_0)$, $\mu_{as}^\tau \ge \frac{\mu_{as}}{2}$. Hence, it follows that
\begin{equation}\label{ineexpezkj}
\mathbb{E}(Z_j^k)\leq -\frac{\mu_{as}}{2} \tau.
\end{equation}
Since $(Z_j^k)_j$ are i.i.d. random variables, the Strong Law of Large Numbers implies that
\[
\frac{1}{n}\sum_{j=0}^{n-1} Z_j^k
\longrightarrow \mathbb{E}(Z_0^k),
\quad \text{a.s.}
\]
Recalling \eqref{ineexpezkj}, we conclude that for almost all $\omega \in \Omega$, there exists an integer $n_0^k = n_0^k(\omega) > 0$ such that for all $n \ge n_0^k$,
\[
\frac{1}{n}\sum_{j=0}^{n-1} Z_j^k \le -\frac{\mu_{as}}{4}\tau,
\]
which gives
\begin{align}\label{ineqq1numsec}
\sum_{j=0}^{n-1} Z_j^k \le -\frac{\mu_{as}}{4} t_n.
\end{align}
From \eqref{ineqqnumm12sec} and \eqref{ineqq1numsec}, we deduce that
\[
|Y_n^k|^p \le e^{-\frac{\mu_{as}}{4} t_n} |Y_0^k|^p,\quad \text{for all } n \ge n_0^k.
\]
By defining
\[
\xi_k(\omega):=\max\Big\{|Y_0^k|^p,\max_{0\le n\le n_0^k}\Big(|Y_n^k|^p e^{\frac{\mu_{as}}{4} t_n}\Big)\Big\},
\]
we conclude that
\begin{align}\label{ineqqsum}
|Y_n^k|^p \le \xi_k(\omega) e^{-\frac{\mu_{as}}{4} t_n},\quad \forall n\in\mathbb{N},\quad \text{a.s.}
\end{align}
Summing the inequalities \eqref{ineqqsum} over \(k = 1,2,\dots,N\), we deduce that
\[
\|Y_n\|^p
\le C \sum_{k=1}^N |Y_n^k|^p
\le \xi(\omega)e^{-\frac{\mu_{as}}{4} t_n}, \quad \text{a.s.},
\]
where
\[
\xi(\omega) = C \sum_{k=1}^N \xi_k(\omega).
\]
This completes the proof of Proposition \ref{proposs7eases}.
\end{proof}


\subsection{Numerical Experiments and Stability Verification}
In this subsection, we present numerical experiments to illustrate the previously established stability results, in both $p$-th moment and almost sure senses. To this end, we consider the following fourth-order stochastic parabolic system under hinged boundary conditions
\begin{equation}\label{testequ1} 
    \begin{cases} 
        dy + y_{xxxx} \,dt = \beta_0 y\,dt + \beta_1 y \,dW(t), & (t,x)\in (0,\infty)\times(0,1), \\
        y = y_{xx} = 0, & (t,x)\in(0,\infty)\times\{0,1\}, \\
        y(0,\cdot) =y_0, & x\in(0,1),
    \end{cases} 
\end{equation} 
where the initial condition is prescribed by $y_0(x) := x^4 - 2x^3 + x$.


\subsubsection{Numerical Verification of $p$-th Moment Stability}
In this first experiment, by approximating the expectation using a Monte Carlo method based on an ensemble of $N_s = 50000$ independent sample paths, we analyze the influence of the noise intensity $\beta_1$ on the mean-square stability of the discrete system \eqref{final_numerical_eq} associated with \eqref{testequ1} where we fix $\beta_0 = 1$. We employ a temporal discretization step of $\tau = 10^{-4}$ over a time horizon $T = 0.2$, while the spatial operator is approximated using $N = 100$ spectral modes with eigenvalues $\lambda_k = (k\pi)^4$. 

Figure~\ref{test1_impact_of_noise_intensity_on_mean_square_stability} illustrates the second-moment evolution for $\beta_1 \in \{2, 6, 9\}$.
The trajectories converge to zero, numerically validating Theorem \ref{pth_stability_abstract} which requires the effective dissipation $\lambda_1 - \beta_0$ to dominate the noise intensity. In accordance with \eqref{mu_def_abstract}, increasing $\beta_1$ reduces the decay rate $\mu_p$, leading to a slower convergence toward the equilibrium.


\begin{figure}[H]
    \centering
    \includegraphics[width=0.75\textwidth, keepaspectratio]{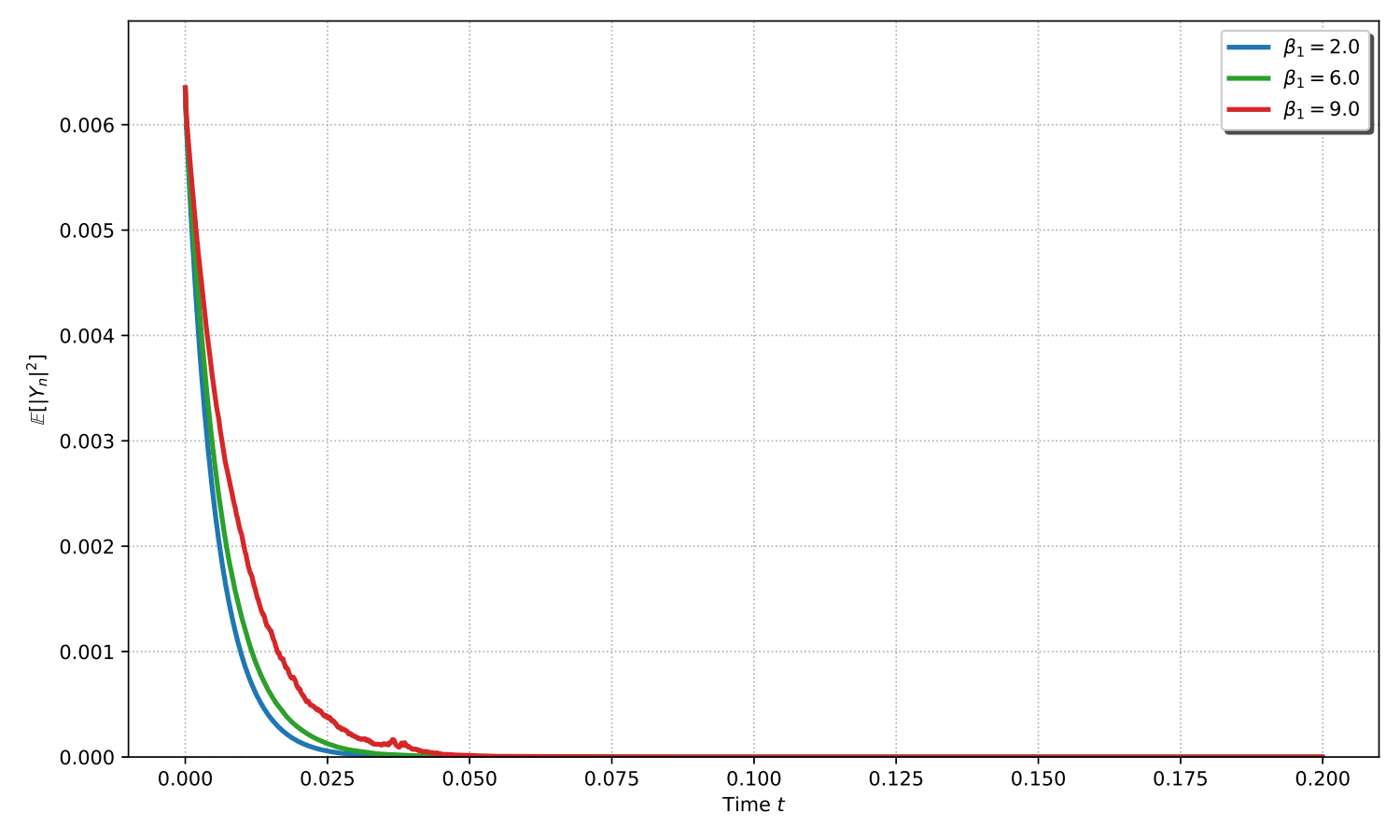}
    \vspace{-0.3cm}
    \caption{Numerical verification of the sensitivity of the mean-square stability to the noise intensity $\beta_1$.}
\label{test1_impact_of_noise_intensity_on_mean_square_stability}
\end{figure}

We next extend the stability analysis to the $p$-th moments for $p \in \{1, 2, 3\}$ to investigate the qualitative shifts in energy dynamics as the moment order increases. As discussed in Remark~\ref{remarksonpmoments}, the quadratic dependence of the stochastic term on $p$ makes the stability criterion significantly more restrictive for higher-order moments. To visualize this sensitivity, we use an ensemble of sample paths $N_s = 100000$ and fix the noise intensity at $\beta_1 = 11$, a value chosen to highlight the transition toward the critical threshold for $p=3$.

Figure~\ref{Test2_Sensitivity_of_moment_stability_to_the_order_p} illustrates the evolution of the normalized $p$-th moments for $p \in \{1, 2, 3\}$. The use of the normalized $p$-th moment $\mathbb{E}[\|Y_n\|^p] / \mathbb{E}[\|Y_0\|^p]$ facilitates a direct comparison of decay rates across different orders. The trajectories numerically validate Theorem~\ref{pth_stability_abstract}; as $p$ increases, the stability condition~\eqref{pthmoexcon} becomes more restrictive, resulting in a diminished decay rate $\mu_p$. For $p=3$, the noise intensity exceeds the sufficient stability threshold, leading to a regime where the quadratic destabilizing term briefly overpowers the linear stabilization; this results in significant transient growth before the restoration of asymptotic decay.

\begin{figure}[H]
    \centering
    \includegraphics[width=0.75\textwidth, keepaspectratio]{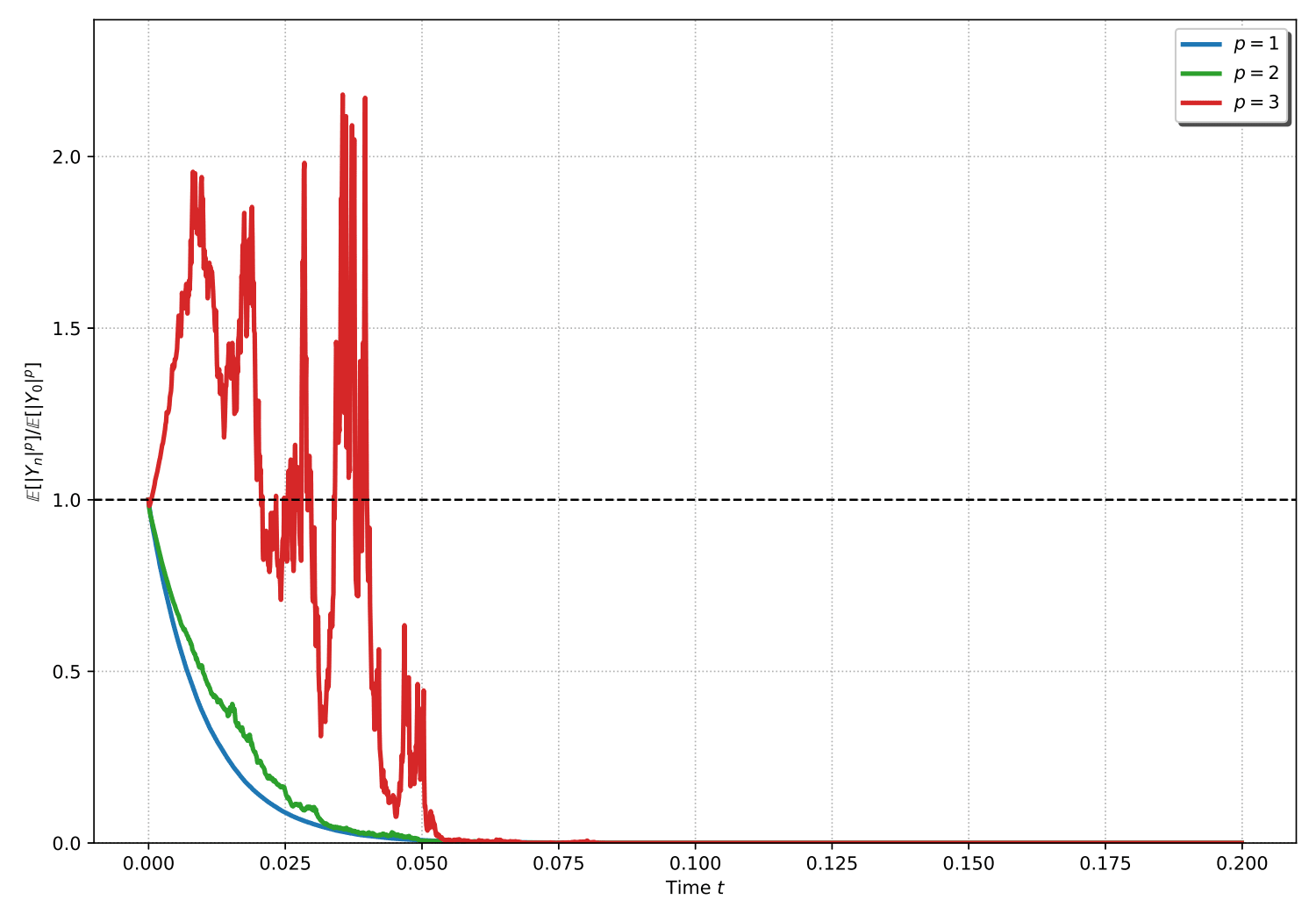}
    \vspace{-0.3cm}
    \caption{Evolution of the normalized $p$-th moments under high noise intensity.}
    \label{Test2_Sensitivity_of_moment_stability_to_the_order_p}
\end{figure}


\subsubsection{Numerical Assessment of Almost Sure Stability}
To verify the pathwise asymptotic behavior alongside numerical evidence for the multiplicative noise-induced stabilization of the stochastic biharmonic heat equation \eqref{testequ1}, we simulate three independent realizations of the discrete solution $Y_n$ using a temporal step $\tau = 10^{-4}$ over a time horizon $T=8$ within the deterministic instability ($\beta_0 = 100 > \lambda_1=\pi^4$) regime. 


Figure \ref{fig:unified_stabilization} numerically validates that the introduction of noise satisfying condition \eqref{alconst} yields almost sure exponential stability. The subplots illustrate a clear sensitivity to the noise intensity $\beta_1$:
\begin{itemize}
    \item At values slightly exceeding the theoretical threshold for almost sure stability, trajectories exhibit persistent stochastic fluctuations with a moderate rate of convergence of the energy $\|Y_n\|^2$ toward the equilibrium.
    \item  Conversely, an increase in noise intensity significantly enhances the stabilization effect, leading to an accelerated decay toward zero.
\end{itemize}

\begin{figure}[H]
    \centering
    \begin{subfigure}[b]{0.75\textwidth} 
        \centering
        \includegraphics[width=\textwidth]{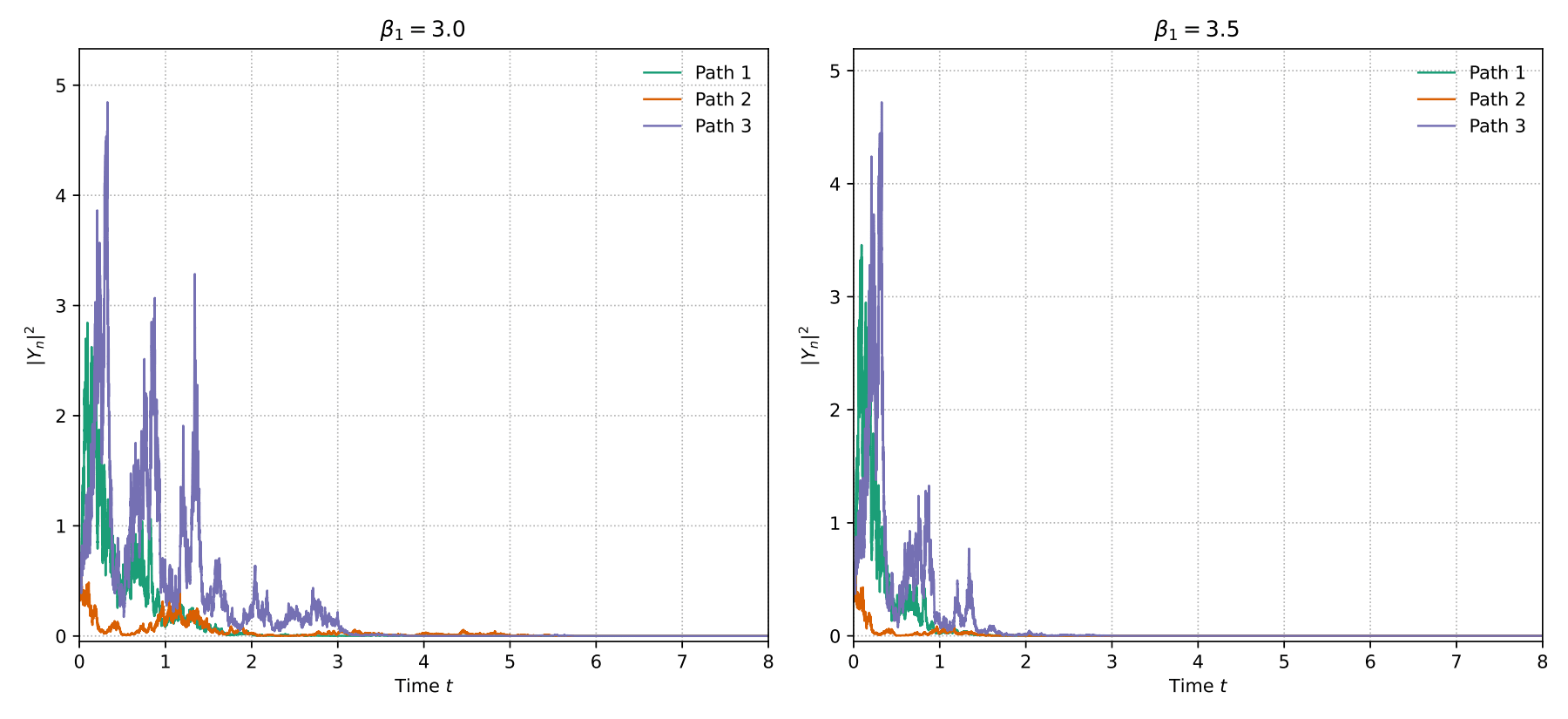}
    \end{subfigure}

    \vspace{0.5cm} 

    \begin{subfigure}[b]{0.75\textwidth}
        \centering
        \includegraphics[width=\textwidth]{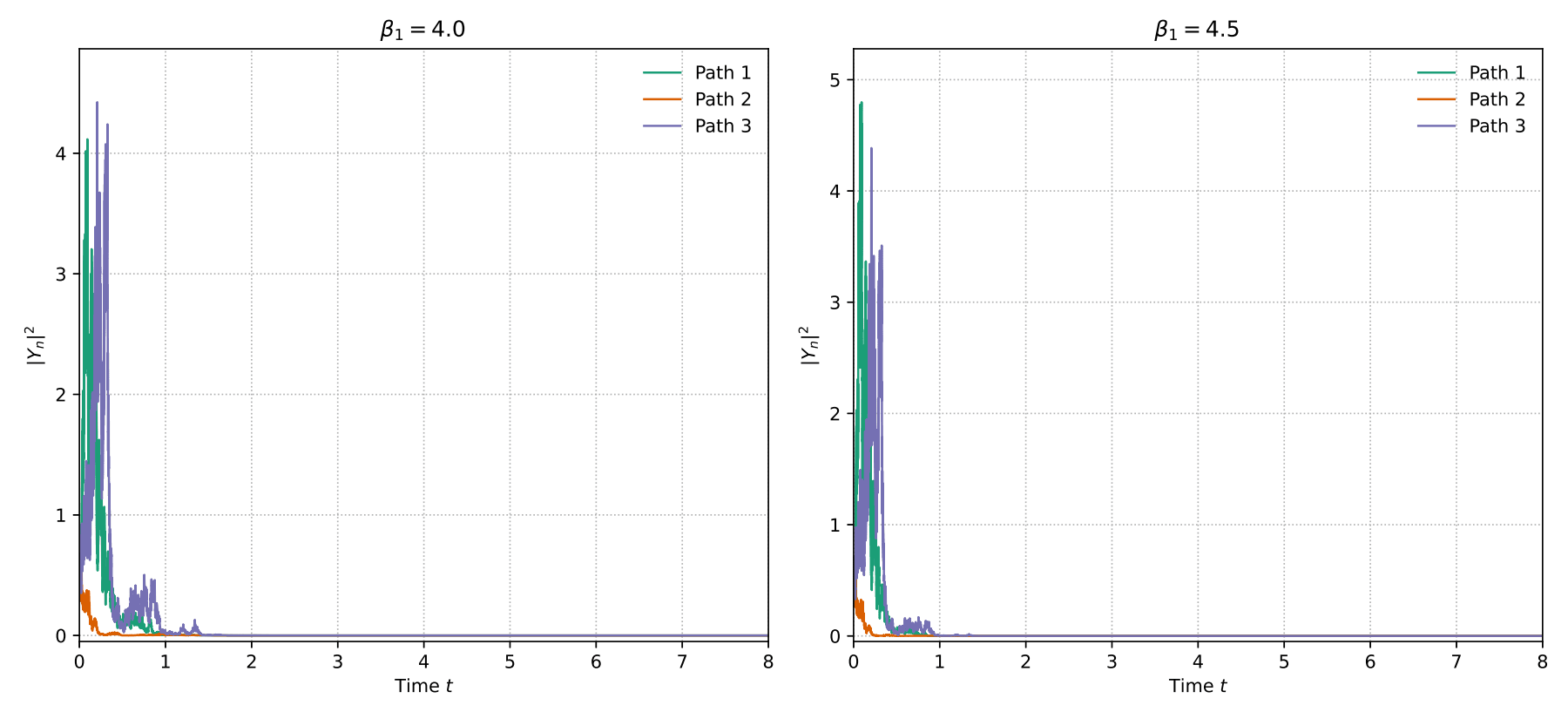}
    \end{subfigure}
    
    \vspace{0.3cm}
    \caption{Evolution of the sample path energy for different noise intensities $\beta_1$.}
    \label{fig:unified_stabilization}
\end{figure}

Unlike the $p$-th moments, Theorem~\ref{as_stability_abstract} predicts that almost sure stability is enhanced as $p$ increases. To clearly visualize the sensitivity of the almost sure decay rate to the power $p$, we set $\beta_0 = 100$ and noise intensity $\beta_1 = 2.7$, satisfying the almost sure stability condition \eqref{alconst}. The numerical simulations were performed with a temporal step $\tau = 10^{-4}$ over $T = 3$, using a single realization to isolate the impact of the power $p$ from stochastic sampling.

The trajectories in Figure \ref{Test4_Pathwise_sensitivity_analysis_for_different_powers} illustrate that the exponential decay rate $\mu_{as}$ is proportional to $p$, resulting in a strictly faster convergence toward equilibrium for higher-order powers.

\begin{figure}[H]
    \centering
    \includegraphics[width=0.75\textwidth, keepaspectratio]{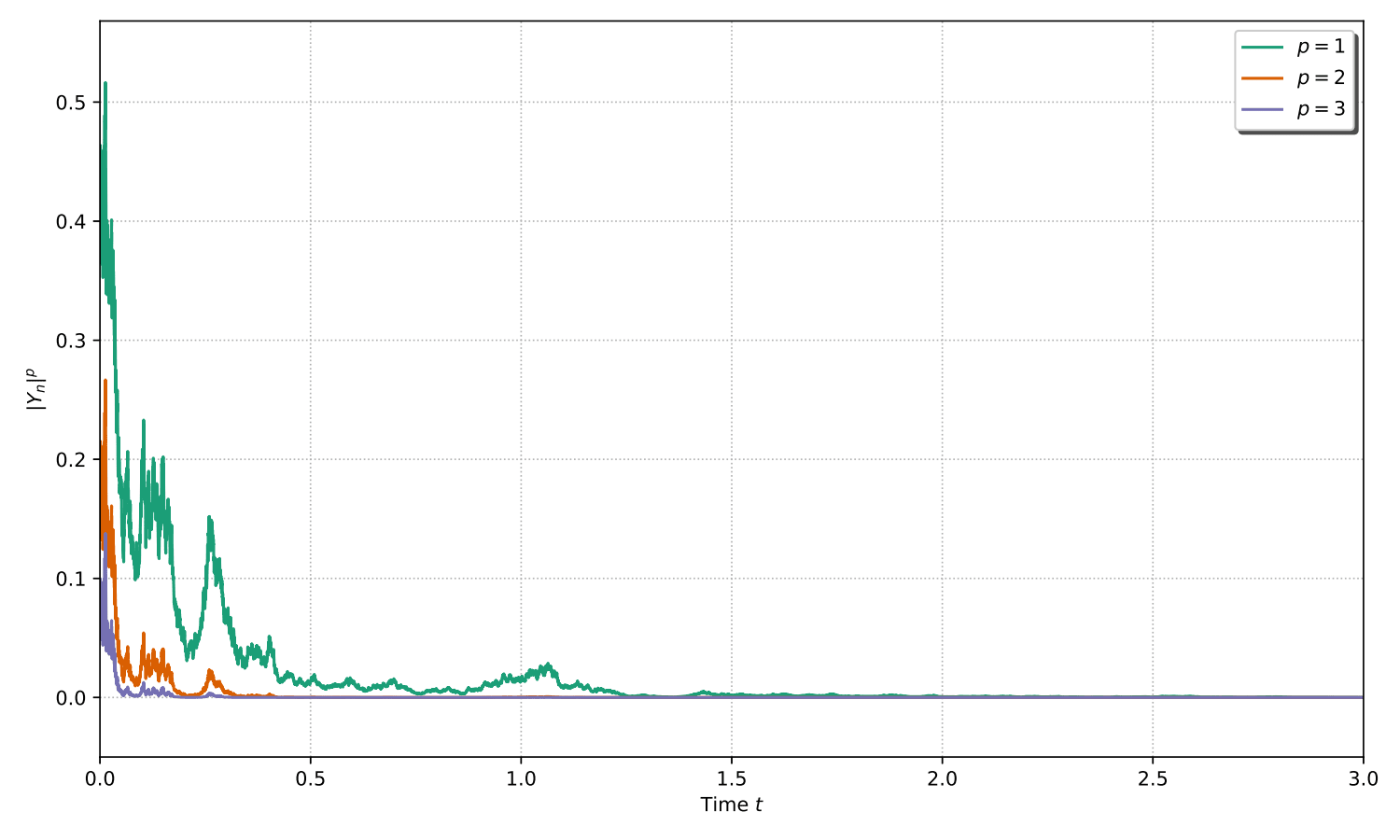}
    \vspace{-0.3cm}
    \caption{Pathwise sensitivity of $\|Y_n\|^p$ for different orders $p$.}
    \label{Test4_Pathwise_sensitivity_analysis_for_different_powers}
\end{figure}

Finally, we investigate the sensitivity of the sample path energy to the noise intensity $\beta_1$ in order to numerically validate the sharpness of the stability condition. By setting $\beta_0 = 97.8$, we observe that when $\beta_1$ does not satisfy the threshold \eqref{alconst}, the energy no longer dissipates and instead exhibits growth rates that intensify as the noise coefficient decreases. These simulations align with the results obtained in \cite[Theorem 2.6]{xie2008moment} for a stochastic heat equation, where it is proven that for deterministic initial conditions, the derived stability condition is both sufficient and necessary. The observed transition from decay to divergence when $\beta_1$ crosses the theoretical threshold confirms that a sufficiently high noise intensity is a fundamental requirement to counteract deterministic instability and ensure the almost sure exponential decay of individual sample paths.

\begin{figure}[H]
    \centering
    \includegraphics[width=0.75\textwidth, keepaspectratio]{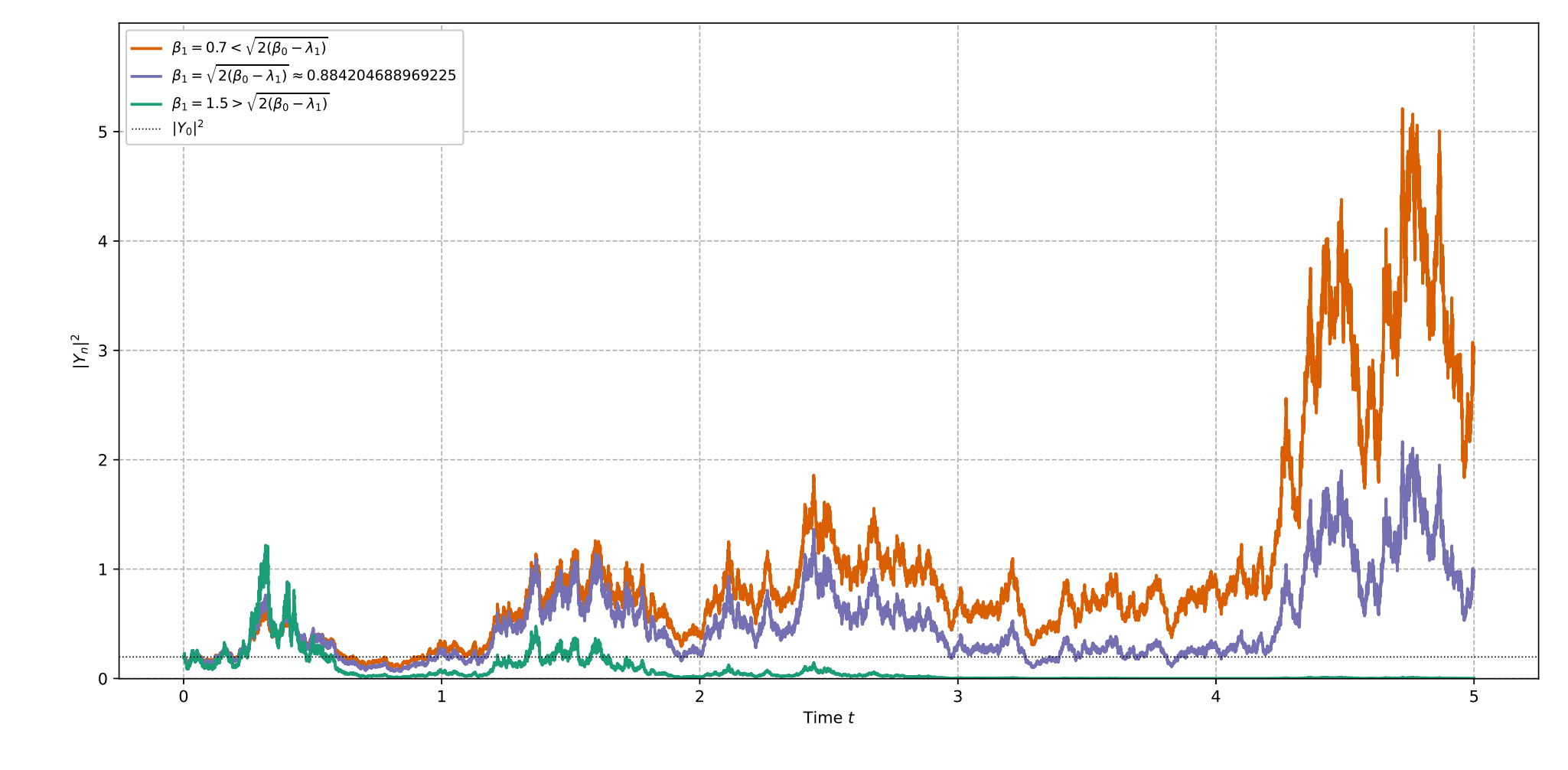}
    \vspace{-0.3cm}
    \caption{Numerical illustration of the sharpness of the almost sure exponential stability condition.}
    \label{Test5_sharpness_of_the_almost_sure_stability_condition}
\end{figure}

\section{Conclusion}

In this paper, we studied the long-time behavior of a class of abstract linear stochastic evolution equations \eqref{uncontrolled_for_eq} in a Hilbert space with multiplicative noise. We derived explicit conditions: condition \eqref{pthmoexcon}, ensuring $p$-th moment exponential stability (Theorem \ref{pth_stability_abstract}), and condition \eqref{alconst}, ensuring almost sure exponential stability (Theorem \ref{as_stability_abstract}). These conditions highlight the role of the principal eigenvalue $\lambda_1$ of the operator $A$, as well as the drift coefficient $\beta_0$ and noise intensity $\beta_1$. We also clarified the relation between these two notions of stability and illustrated how the results apply to several stochastic partial differential equations, including the stochastic heat equation, stochastic biharmonic heat equation, fractional stochastic heat equation, and stochastic degenerate parabolic equations. Finally, we proved that the obtained stability criteria are preserved at the discrete level for the fully discrete approximation of \eqref{uncontrolled_for_eq} obtained via the spectral Galerkin method combined with the implicit Euler--Maruyama scheme. We conclude the paper with numerical experiments applied to a one-dimensional fourth-order stochastic parabolic equation that validate our theoretical results.

\section*{Acknowledgements}
This article is based upon work from the project PRIN2022 D53D23005580006 ``Elliptic and parabolic problems, heat kernel estimates and spectral theory''. The second author is member of the ``Gruppo Nazionale per l’Analisi Matematica, la Probabilità e le loro Applicazioni (GNAMPA)'' of the Istituto Nazionale di Alta Matematica (INdAM).

\end{document}